\newtheorem{definition}{Definition}
\newtheorem{theorem}[definition]{Theorem}
\newtheorem{corollary}[definition]{Corollary}
\newtheorem{lemma}[definition]{Lemma}
\newcommand{\emtext}[1]{\text{\em #1}}
\DeclareMathOperator{\sign}{sgn}
\newcommand{\Nz}{\ensuremath{\mathbb{N}}} 
\newcommand{\Zz}{\ensuremath{\mathbb{Z}}} 
\title{Jacobsthal numbers in generalised Petersen graphs}
\author{Henning Bruhn \and Laura Gellert \and Jacob G\"unther}
\date{}
\begin{document}

\maketitle 
\begin{abstract}
We prove that the number of $1$-factorisations of a generalised Petersen
graph of the type $GP(3k,k)$ is equal to the $k$th Jacobsthal number $J(k)$
if $k$ is odd, and equal to $4J(k)$, when $k$ is even. 
Moreover, we verify the list colouring conjecture for $GP(3k,k)$. 
\end{abstract}

\section{Introduction}

Often, combinatorial objects that on the surface seem quite different
nevertheless exhibit a deeper, somewhat hidden, connection. This is, 
for instance, the case for 
tilings of $3 \times (k-1)$-rectangles with $1 \times 1$ and $2\times 2$-squares~\cite{Heu99},
certain meets in lattices~\cite{Day_Kleit_West79},
and
the number of walks of length~$k$ between adjacent vertices in a triangle~\cite{Barry07}:
in all three cases the cardinality is equal to the $k$th \emph{Jacobsthal number}. 
Their sequence
\[
0,1,1,3,5,11,21,43,85,171,341\ldots
\]
is defined by the recurrence relation $J(k)= J(k-1) + 2 J(k-2)$
and initial values $J(0)=0$ and $J(1)=1$. Jacobsthal numbers also appear in the context
of 
alternating sign matrices~\cite{Frey_Sel00},
the Collatz problem and in the study of 
necktie knots~\cite{FM00}; 
see~\cite[A001045]{oeis} for much more.

In this article,  we add to this list by describing a relationship to 
certain \emph{generalised Petersen graphs} $GP(3k,k)$. 
These graphs arise from matching $k$ disjoint triangles to 
triples of equidistant vertices on a cycle of $3k$ vertices; 
see below for a precise definition 
and Figure~\ref{duererfig} for two examples.

\begin{theorem} \label{Thm_Number_1fact}
For odd $k$, 
the number of $1$-factorisations of the generalised Petersen graph $GP(3k,k)$ equals the Jacobsthal number $J(k)$; for even $k$, the number is equal to~$4J(k)$.
\end{theorem}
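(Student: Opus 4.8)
The plan is to convert the count into a transfer-matrix computation whose blocks are exactly the ``number of walks of length $k$'' expressions that equal Jacobsthal numbers. Since $GP(3k,k)$ is cubic, connected and has a vertex of degree~$3$, no non-trivial permutation of colours fixes a proper $3$-edge-colouring, so the number of $1$-factorisations is one sixth of the number of proper $3$-edge-colourings. Identify the three colours with the non-zero elements of $V=\Zz_2^2$; a proper $3$-edge-colouring is then an assignment of elements of $V\setminus\{0\}$ to the edges such that the three edges at every vertex sum to~$0$. Label the outer cycle $u_0u_1\cdots u_{3k-1}$, the inner vertices $v_0,\dots,v_{3k-1}$, the spokes $u_iv_i$, and the inner triangles $T_j=\{v_j,v_{j+k},v_{j+2k}\}$.

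I would first reduce a colouring to its values $s_i$ on the spokes. Summing the colours at $v_i$ shows that $s_i$ equals the colour of the opposite edge $v_{i+k}v_{i+2k}$ of the triangle through $v_i$, so the three spokes meeting a triangle carry the three distinct non-zero colours, and conversely any such choice extends uniquely over the triangle. On the outer cycle, if $f_i$ is the colour of $u_iu_{i+1}$ then $f_i=f_{i-1}+s_i$, hence $f_i=f_0+\sigma_i$ with $\sigma_i:=s_1+\dots+s_i$, $\sigma_0:=0$; going once around forces $\sum_i s_i=0$, which holds automatically since each triangle already contributes the full sum~$0$. The colouring then exists, and is unique, precisely when some $f_0\in V$ avoids all $\sigma_i$, that is, when $\{\sigma_0,\dots,\sigma_{3k-1}\}\ne V$; since the $s_i$ are not all equal (each triangle uses all three colours) this set has size at least~$3$, so it then misses a unique non-zero element $w$. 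Hence the number of $3$-edge-colourings equals the number of spoke patterns $s\colon\Zz_{3k}\to V\setminus\{0\}$ whose triangles each use all three colours and whose partial sums $\sigma_i$ avoid some $w\in V\setminus\{0\}$; by the $S_3\cong\mathrm{GL}(2,\Zz_2)$-action on colours this number is $3M_k$, where $M_k$ counts such patterns for one fixed $w$.

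To compute $M_k$, fix $w$, put $\{x,y\}=V\setminus\{0,w\}$ (so $x+y=w$), and read $\sigma_0,\dots,\sigma_{3k}$ as a closed walk that must stay in $\{0,x,y\}$: it is a closed length-$3k$ walk on the triangle $\Delta$ with vertices $0,x,y$, whose edges $0x,0y,xy$ are precisely the ones traversed by the steps $x,y,w$. Grouping the indices modulo $k$, the triangle constraint on $s$ says that three tokens — one running along each of the arcs $\sigma_0\cdots\sigma_k$, $\sigma_k\cdots\sigma_{2k}$, $\sigma_{2k}\cdots\sigma_{3k}$, which are chained cyclically — move, at each of the $k$ steps, along the three distinct edges of $\Delta$. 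Letting $\mathcal T$ be the $27\times27$ transfer matrix on $\{0,x,y\}^3$ recording these simultaneous moves, $M_k=\sum_{\beta,\gamma}(\mathcal T^k)\big[(0,\beta,\gamma),(\beta,\gamma,0)\big]$, a cyclic-shift-twisted trace of $\mathcal T^k$. The crux is that $\mathcal T$ respects the type of a triple: the three ``all equal'' triples are isolated; the six ``all distinct'' triples form a closed set on which $\mathcal T$ sends each triple to its two cyclic rotations, so $\mathcal T$ restricts there to two copies of the adjacency matrix of a triangle and contributes $2J(k)$, using that the number of walks of length $k$ between two adjacent vertices of a triangle is $J(k)=\tfrac13\big(2^k-(-1)^k\big)$; and the eighteen ``exactly two equal'' triples split into orbits on which $\mathcal T$ restricts to copies of a $6$-cycle, with the relevant source and target at distance~$2$ in it, so each such source contributes the number of length-$k$ walks between distance-$2$ vertices of a hexagon, namely $J(k)$ if $k$ is even and $0$ if $k$ is odd. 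Counting the sources that occur (two of the first kind, six of the second) gives $M_k=2J(k)$ for odd $k$ and $M_k=8J(k)$ for even $k$, whence $3M_k/6$ equals $J(k)$ and $4J(k)$ respectively.

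The reduction to spoke patterns and the admissibility criterion (``a colouring extends $\Leftrightarrow$ the partial sums are not all of $V$'', using that the $s_i$ cannot all coincide) are routine, as is the colour-symmetry step, so I expect the real work to be in the transfer matrix: proving that $\mathcal T$ splits by type as claimed, and — the delicate point — determining for each source/target pair which orbit it lies in and the distance separating them inside the pertinent cycle, since it is precisely the ``distance~$2$ in a hexagon'' phenomenon that produces the parity split between the $J(k)$ and $4J(k)$ cases.
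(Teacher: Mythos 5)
Your argument is correct, and it reaches the theorem by a genuinely different route from the paper, even though it lands on the same two graphs in the end. The paper parametrises a $3$-edge-colouring by its restriction to the outer cycle, proves an extension/uniqueness lemma for the spokes and inner triangles, and encodes the propagation of the outer colour triples $\phi_i\to\phi_{i+1}$ as a walk in a fixed triangle $T$ or hexagon $H$; you instead parametrise by the spoke colours via the $\Zz_2^2$ encoding, so that the inner triangles become the condition ``the three spokes of a triangle are distinct'' and the outer cycle becomes a partial-sum walk confined to three of the four elements of $V$, with the existence and uniqueness of $f_0$ replacing the paper's extension lemma. Your transfer-matrix states $(\sigma_j,\sigma_{k+j},\sigma_{2k+j})$ are, up to the shift by $f_0$, exactly the paper's colour triples before modding out by colour permutations, which is why your decomposition (two $3$-cycles on the all-distinct triples, three $6$-cycles on the two-equal ones, sources at distance $1$ resp.\ $2$ from their cyclically shifted targets) reproduces the paper's $t_k(1)+3h_k(2)$ as $\tfrac12\bigl(2J(k)+6h_k(2)\bigr)$; I checked your structural claims about $\mathcal T$ (isolation of the constant triples, the two triangles, the three hexagons, and the distance-$2$ placement of all six two-equal source/target pairs) and they are all correct, as is the sanity check $M_1=2$, $M_2=8$. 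What your approach buys is a cleaner, more algebraic reduction (no case analysis of how colour triples force the spokes, and the colour symmetry is handled once by $\mathrm{GL}(2,\Zz_2)$); what it costs is the finite but nontrivial verification of the orbit structure of $\mathcal T$, which you rightly flag as the delicate point but only sketch. Two ingredients you assert without proof are exactly the paper's Lemmas~\ref{Lemma_Number_walks_C3} and~\ref{Lemma_Bijection_undir_walks}: that the number of length-$k$ walks between adjacent vertices of a triangle is $\tfrac13\bigl(2^k-(-1)^k\bigr)=J(k)$, and that the number between distance-$2$ vertices of a hexagon is $J(k)$ for even $k$ and $0$ for odd $k$ (the paper gets the latter from the former via the double cover $H\to T$); these are standard circulant computations, so citing or reproving them closes the only real gaps.
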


A \emph{$1$-factorisation} of a graph $G=(V,E)$ is a partition of the edge set into 
perfect matchings.
(A \emph{perfect matching} is a 
set of $^{|V|}/_2$ edges, no two of which share an endvertex.) 
Such factorisations are closely linked to edge colourings: indeed, a $d$-regular graph $G$
has a $1$-factorisation if and only if its edge set can be coloured with $d$ colours. 
That is, the \emph{chromatic index}, the minimal number of colours needed to colour
all the edges, is equal to~$d$. 
\begin{figure}[ht]
\centering
\def\drawGP{
\def\angle{360/\vxnumber}
\foreach \i in {1,...,\vxnumber}{
    \draw[hedge] (\angle*\i+90:\radius) -- (\angle*\i+\angle+90:\radius); 
    \draw[hedge] (\angle*\i+90:\radius) -- (\angle*\i+90:\innerradius);
    \draw[hedge] (\angle*\i+90:\innerradius) -- (\angle*\i+\angle*\jumpnumber+90:\innerradius);
}
\foreach \i in {1,...,\vxnumber}{
    \node[hvertex] (v\i) at (\angle*\i+90:\radius) {};
    \node[hvertex] (ui\i) at (\angle*\i+90:\innerradius) {};
}
}
\begin{tikzpicture}[every edge quotes/.style={},scale=1]
\tikzstyle{hvertex}=[thick,circle,inner sep=0.cm, minimum size=2.5mm, fill=white, draw=black]
\tikzstyle{hedge}=[ultra thick]

\begin{scope}
\def\vxnumber{6}      
\def\jumpnumber{2}    
\def\radius{1.8cm}
\def\innerradius{1cm}
\drawGP
\end{scope}

\begin{scope}[shift={(5,0)}]
\def\vxnumber{9}      
\def\jumpnumber{3}    
\def\radius{1.8cm}
\def\innerradius{1cm}
\drawGP
\end{scope}

\end{tikzpicture}
\caption{The D\"urer graph $GP(6,2)$ and the generalised Petersen graph $GP(9,3)$}
\label{duererfig}
\end{figure}
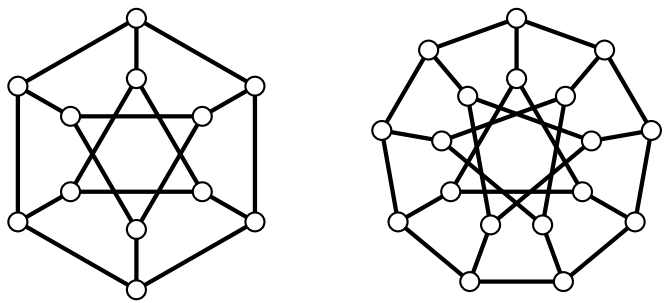

\emph{List edge-colourings} generalise edge colourings. 
Given lists $L_e$ of allowed colours at every edge $e \in E$, 
the task consists in colouring the edges so that every edge $e$ receives a colour 
from its list $L_e$. 
The \emph{choice index} of $G$ is the smallest number $\ell$ 
so that any collection of lists $L_e$ of size~$\ell$ each 
allows a list colouring. 
The choice index is at least as large as the chromatic index. 
The famous \emph{list-colouring conjecture} asserts that the two indices never differ:

\newtheorem*{lcc}{List-colouring conjecture}

\begin{lcc}
The chromatic index of every simple graph equals its choice index.
\end{lcc}

While the conjecture has been verified for some graph classes, 
 bipartite graphs~\cite{Gal95} and regular planar graphs~\cite{EG96}
for instance,
the conjecture remains wide open for most 
graph classes, among them cubic graphs.  
We prove:

\begin{theorem} \label{Thm_LECC_proved}
The list-colouring conjecture is true for generalised Petersen graphs $GP(3k,k)$.\sloppy
\end{theorem}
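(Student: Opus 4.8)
By Theorem~\ref{Thm_Number_1fact} the graph $GP(3k,k)$ has at least one $1$-factorisation (since $J(k)\ge 1$ for $k\ge 1$), so it is Class~$1$ and $\chi'(GP(3k,k))=3$. As the choice index never lies below the chromatic index, it suffices to prove that $GP(3k,k)$ is $3$-edge-choosable. Fix lists $L_e$ with $|L_e|=3$ on every edge. The prism $GP(3,1)$ is trivially $3$-edge-choosable, so assume $k\ge 2$; write $v_0,\dots,v_{3k-1}$ for the outer cycle with edges $e_i=v_iv_{i+1}$, and $u_0,\dots,u_{3k-1}$ for the inner vertices, so that $T_j=\{u_j,u_{j+k},u_{j+2k}\}$ spans a triangle for $0\le j<k$ and $v_iu_i$ is a spoke.

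I would build the colouring by sweeping through the triangles $T_0,T_1,\dots,T_{k-1}$ in cyclic order, via the transfer-matrix structure that also underlies the proof of Theorem~\ref{Thm_Number_1fact}. Consecutive triangles $T_j,T_{j+1}$ are joined by exactly the three outer edges $e_j,e_{j+k},e_{j+2k}$, and I would take as the \emph{state} between them the triple of colours on those edges; as these edges are pairwise non-adjacent, there are at most $27$ states. Given an incoming state at $T_{j+1}$, choose in turn: a proper colouring of the three triangle edges of $T_{j+1}$ (which form a $C_3$, hence $3$-edge-choosable); colours for the three spokes of $T_{j+1}$ (the spoke at an outer vertex must avoid the two triangle colours at its inner endpoint and the incoming outer edge colour there --- three forbidden colours out of a list of three, so in general, but not always, a choice survives); and colours for the three outgoing edges $e_{j+1},e_{j+k+1},e_{j+2k+1}$ (each avoids only the incoming edge colour and the spoke colour at the shared outer vertex, two colours out of three, so a choice always survives). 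This gives a \emph{transfer relation} $R_j$ between consecutive states, and a proper $L$-colouring of $GP(3k,k)$ is precisely a closed walk of length $k$ through $R_0,\dots,R_{k-1}$; equivalently, the composition of the $R_j$ once around the cycle --- twisted by the $3$-cycle that the outer cycle induces on the three strands at the wrap-around --- must have a fixed point.

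So the whole matter reduces to showing that this cyclic composition is non-empty. Informally this says that one can colour the triangles and spokes so that, with the spokes carrying colours $s_0,\dots,s_{3k-1}$, the residual instance on the outer cycle --- list-colour the edges of $C_{3k}$ from the reduced lists $L_{e_i}\setminus\{s_i,s_{i+1}\}$, each of size at least $1$ --- is solvable. For a cycle this last condition is explicit and mild: it fails only when the forced edges (those with a singleton reduced list) propagate to a conflict, or when $k$ is odd, no edge is forced, and all the reduced lists coincide in a single $2$-element set (the Erd\H{o}s--Rubin--Taylor obstruction). The latitude to exploit lies entirely in the choices of triangle colourings and spoke colours, which is exactly what the relations $R_j$ encode.

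The main obstacle, I expect, is that a single triangle can be almost rigid for adversarial lists: even with lists of size~$3$, its proper colouring --- and hence the spoke colours forced from it --- can be essentially unique, so its transfer relation may collapse to a partial function, and one cannot argue triangle by triangle. One has to show instead that the slack available around the whole cyclic chain, together with the genuine room afforded by the Erd\H{o}s--Rubin--Taylor criterion on $C_{3k}$, always suffices to close up the walk. I anticipate this will need a classification of the few rigid list patterns on a triangle, followed by a global argument --- in the spirit of the Jacobsthal recursion behind Theorem~\ref{Thm_Number_1fact} --- that their transfer relations still compose to something non-empty around the cycle; and, just as in Theorem~\ref{Thm_Number_1fact}, the cases $k$ even and $k$ odd (the parity of $3k$ entering through $C_{3k}$) will presumably be treated separately.
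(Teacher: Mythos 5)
Your write-up is a plan, not a proof: the decisive step --- showing that the cyclic composition of the transfer relations $R_j$ is non-empty for \emph{every} choice of $3$-lists --- is exactly where you stop, and you yourself flag the reason it is hard. As you note, at a spoke the incoming outer-edge colour together with the two triangle colours at the inner endpoint can forbid all three colours of the spoke's list, so an individual relation $R_j$ can be empty on some states; a triangle with adversarial lists can be rigid; and even when every local step survives, closing the walk around the cycle (including the Erd\H{o}s--Rubin--Taylor obstruction on $C_{3k}$ when all reduced lists collapse to the same pair) requires a global argument that you only ``anticipate''. Greedy or transfer-matrix sweeps of this kind are known to fail for list edge-colouring precisely because the relations need not be total, and no amount of local bookkeeping by itself certifies a global fixed point. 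So as it stands there is a genuine gap, and it is the entire content of the theorem.

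The paper avoids this difficulty altogether by going through the Alon--Tarsi criterion in the form of Theorem~\ref{Thm_ElGo}: a $3$-regular graph is $3$-edge-choosable provided the \emph{signed} count of its $1$-factorisations is non-zero, so one never has to construct a colouring from given lists at all. For odd $k$ this is immediate from Theorem~\ref{Thm_Number_1fact}, since $J(k)$ is odd and an odd number of $\pm 1$'s cannot sum to zero (Corollary~\ref{Cor_Number_pos_neg_1F_k_odd}). For even $k$ the paper refines the walk encoding behind Theorem~\ref{Thm_Number_1fact} to signed walks in the graphs $T_\pm$ and $H_\pm$ (Lemmas~\ref{Lemma_signs_sigma_i}--\ref{Lemma_Bijection_signed_walks}) and computes $t_k^+(2)\neq t_k^-(2)$ explicitly (Lemma~\ref{Lem_Number_pos_neg_walks_T}). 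If you want to salvage your approach you would need to prove your ``non-empty cyclic composition'' claim for arbitrary lists, which is a substantially harder combinatorial statement than the signed enumeration the paper actually carries out; alternatively, note that your transfer-matrix structure is essentially the unsigned version of the paper's walk correspondence (Lemma~\ref{Lem_Comb_colourTriples}), and the missing ingredient is precisely the sign refinement that lets the polynomial method do the list-colouring work for you.
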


Our proof is based on the algebraic colouring criterion of Alon and Tarsi~\cite{A_Tar92}.
In our setting, it suffices to check that, for a suitable definition of 
a sign, the number of positive $1$-factorisations differs from the number 
of negative $1$-factorisations. In this respect our second topic ties in 
quite nicely with our first, and we will be able to re-use some of
the observations leading to Theorem~\ref{Thm_Number_1fact}. 

Generalised Petersen graphs were first studied by Coxeter~\cite{Cox50}. 
For $k, n \in \Nz$ with $k < \tfrac{n}{2}$,
the graph $GP(n,k)$  is defined as the graph on  vertex set $\{ u_i, v_i \, : \, i \in \Zz_{n}  \}$ 
with edge set $\{ u_iu_{i+1}, u_iv_i, v_iv_{i+k} \, : \, i~\in~\Zz_{n} \}$. 
Generalised Petersen graphs are cubic graphs. All of them, except the Petersen graph 
itself, have chromatic index~$3$; see Watkins~\cite{Watkins69}, and
Castagna and Prins~\cite{Cas_Prins72}. In particular, this means that 
the list colouring conjecture for them does not follow from the list version 
of Brooks' theorem. 
We focus in this article on the graphs $GP(3k,k)$, the smallest
of which, $GP(6,2)$, is called the \emph{Dürer graph}.  


\smallskip

We follow standard graph theory notation as can be found, for instance, in 
the book of Diestel~\cite{diestelBook10}.

\section{Counting $1$-factorisations}\label{Section_Counting_1-fact}

In the rest of the article we consider a fixed generalised Petersen graph $GP=GP(3k,k)$.
The \emph{outer cycle} $C_O$ of $GP$, the cycle $u_0u_1\ldots u_{3k-1}u_{0}$, will play a 
key role. Its edges we call \emph{outer edges}, while the edges $u_iv_i$ for $i\in\mathbb Z_{3k}$ are called \emph{spokes}. See Figure~\ref{Fig_Vertices_V_i} for an illustration.

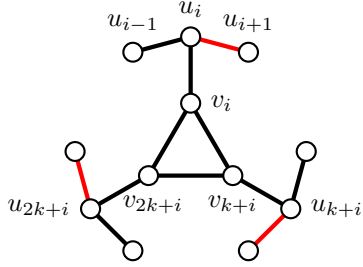
\begin{figure}[ht]
\centering
\begin{tikzpicture}[scale=0.8,auto]
\tikzstyle{hvertex}=[thick,circle,inner sep=0.cm, minimum size=2.5mm, fill=white, draw=black]
\tikzstyle{hedge}=[ultra thick]

\def\irad{0.8cm}
\def\orad{1.9cm}

\foreach \i in {0,1,2}{
  \draw[hedge] (90+\i*120:\irad) -- (210+\i*120:\irad);
  \draw[hedge] (90+\i*120:\irad) -- (90+\i*120:\orad);
  \draw[hedge,red] (60+\i*120:\orad) -- (90+\i*120:\orad);
  \draw[hedge] (90+\i*120:\orad) -- (120+\i*120:\orad);
}

\foreach \i in {0,1,2}{
  \coordinate (v\i) at (90+\i*120:\irad);
  \coordinate (u\i) at (90+\i*120:\orad);
  \coordinate (r\i) at (60+\i*120:\orad);
  \coordinate (l\i) at (120+\i*120:\orad);
}

\node[hvertex,label=above:$u_{i}$] at (u0) {};
\node[hvertex,label=left:$u_{2k+i}$] at (u1) {};
\node[hvertex,label=right:$u_{k+i}$] at (u2) {};

\node[hvertex,label=right:$v_{i}$] at (v0) {};
\node[hvertex,label=below:$\,\,v_{2k+i}$] at (v1) {};
\node[hvertex,label=below:$v_{k+i}$] at (v2) {};

\foreach \i in {1,2}{
  \node[hvertex] at (l\i){};
  \node[hvertex] at (r\i){};
}

\node[hvertex,label=above:$u_{i-1}$] at (l0){};
\node[hvertex,label=above:$u_{i+1}$] at (r0){};

\end{tikzpicture}
\label{Fig_Vertices_V_i}
\caption{A small part of $GP(3k,k)$; the colours of the red edges make up $\phi_i$}
\end{figure}

Our objective is to count 
 the number of $1$-factorisations of $GP$. Rather than counting them directly, we will consider edge colourings,
and here we will see that it suffices to focus on certain edge colourings of the outer cycle.

Let $\phi$ be an edge colouring with colours $\{ \mathsf{1,2,3}\}$ of either the whole 
graph $GP$ or only of the outer cycle $C_O$. 
We split  $\phi$
 into $k$ triples
\[
\phi_i=
\left( 
\phi\left( u_{i}u_{i+1} \right) , 
\phi\left( u_{k+i}u_{k+i+1}\right) ,
\phi\left( u_{2k+i}u_{2k+i+1}\right) 
\right) \text{ for }i=1,\ldots, k.
\]
To keep notation simple, we will omit the parentheses and commas, and only 
write $\phi_i=\mathsf{123}$ to mean $\phi_i=(\mathsf{1,2,3})$. 
We, furthermore, define also $\phi_{k+1}=
\left( 
\phi\left( u_{k+1}u_{k+2} \right) , 
\phi\left( u_{2k+1}u_{2k+2}\right) ,
\phi\left( u_{1}u_{2}\right) 
\right)
$, and note that $\phi_{k+1}$ is obtained from $\phi_1$ by a cyclic shift.

It turns out that the colours on the outer cycle already uniquely determine
the edge colouring on the whole graph. Moreover, it is easy to describe
which colourings of the outer cycle extend to the rest of the graph: 

\begin{lemma} \label{Lem_Comb_colourTriples}
Let $\phi:E(C_O)\to\{\mathsf{1,2,3}\}$ be an edge colouring of $C_O$. Then the following two statements
are equivalent:
\begin{enumerate}[\rm (i)]
\item 
there is an edge colouring $\gamma$ of 
$GP$ with $\gamma|_{C_O}=\phi$; \label{Enum_ExistsGamma}
and
\item there is a permutation $(\mathsf a,\mathsf b,\mathsf c)$ of $(\mathsf{1,2,3})$
so that $\phi_{i}$ and $\phi_{i+1}$ are for all $i=1,\ldots, k$ adjacent vertices 
in one of the graphs $T$ and $H$ in Figure~\ref{Fig_T_and_H}. \label{Enum_AdjacentVert}
\end{enumerate}  
Furthermore, if there is  an edge colouring $\gamma$ of $GP$ as in \eqref{Enum_ExistsGamma} then it is unique. 
\end{lemma}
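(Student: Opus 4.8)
The plan is to analyse, edge by edge, which colours are forced on the spokes and inner edges once the outer colouring $\phi$ is fixed. The key local fact is this: at each outer vertex $u_i$ the two incident outer edges $u_{i-1}u_i$ and $u_iu_{i+1}$ already use two colours, so the spoke $u_iv_i$ is forced to carry the third colour whenever those two outer colours differ, and is free (can be either of two colours) when they agree. So a necessary condition for (i) is that the spoke colour $\gamma(u_iv_i)$ is determined by $\phi$ at every vertex where consecutive outer edges differ; and then at an inner vertex $v_i$, the three edges $v_{i-k}v_i$, $v_iv_{i+k}$ and the spoke $u_iv_i$ must form a rainbow, which forces the two inner-cycle edges at $v_i$ to be the two colours other than the spoke colour. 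Iterating this forcing around the inner structure (which, because we jump by $k$ in a cycle of length $3k$, splits into $\gcd(3k,k)=k$ triangles, or a single structure — here the inner graph on $\{v_i,v_{k+i},v_{2k+i}\}$ is a triangle for each residue $i$) will show both that $\gamma$ is unique when it exists, and will yield the combinatorial constraint that is repackaged in (ii).

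Concretely, I would first prove the uniqueness clause, since it is the cleanest: given $\phi$, show that on each inner triangle $v_i v_{k+i} v_{2k+i}$ the three spoke colours $\gamma(u_iv_i), \gamma(u_{k+i}v_{k+i}), \gamma(u_{2k+i}v_{2k+i})$ together with the edge-colouring of the triangle must be a proper $3$-edge-colouring, and a triangle with a leg (i.e.\ $K_4$ minus a perfect matching, which is exactly what one triangle-plus-its-three-spokes is) has at most one proper $3$-edge-colouring once the three leg-colours are prescribed; hence $\gamma$ is determined on all spokes and all inner edges by its values on those spokes, which in turn are determined by $\phi$ as soon as they are determined at all. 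The subtlety is the ``free'' spokes (where two consecutive outer edges agree): I would argue that on each triangle not all three spokes can be free — if two legs of the $K_4$-minus-matching are uncoloured the colouring still need not extend freely; one checks the small finite case to see exactly which triples $(\text{leg}_1,\text{leg}_2,\text{leg}_3)$ of colours-or-blanks extend, and that when an extension exists it is unique. This case analysis on a single triangle-with-legs is the technical heart.

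For the equivalence, I would translate the above into a statement purely about $\phi$. The forced spoke colour at $u_i$ is a function of $\phi_j$ and of the ``neighbouring'' entries, so the extendability condition becomes: the triple of outer-edge colours sitting at a triangle must, after reading off the forced (or choosable) spoke colours, be compatible with a proper edge-colouring of the triangle. Because $\phi_{i+1}$ records the next outer edge along each of the three arcs, the compatibility of the triple at ``position $i$'' with the triple at ``position $i+1$'' is precisely an adjacency relation between $\phi_i$ and $\phi_{i+1}$ in some fixed graph on the set of colour-triples; matching up exactly which pairs are allowed produces the two graphs $T$ and $H$ of Figure~\ref{Fig_T_and_H}, and the permutation $(\mathsf a,\mathsf b,\mathsf c)$ accounts for the global relabelling freedom of the three colours. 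I expect the main obstacle to be bookkeeping: keeping straight the three parallel arcs (indices $i$, $k+i$, $2k+i$) and the wrap-around via $\phi_{k+1}$ being the cyclic shift of $\phi_1$, so that the chain of adjacencies $\phi_1 \sim \phi_2 \sim \cdots \sim \phi_k \sim \phi_{k+1}$ closes up consistently; verifying that the closing-up works and corresponds exactly to the edge-colouring extending around the whole graph is where I would be most careful.
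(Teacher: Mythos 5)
Your overall strategy---spoke colours forced by the outer edges, inner triangles forced by the spokes, and extendability repackaged as an adjacency relation between consecutive colour triples---is the same as the paper's. However, the step you single out as the technical heart rests on a misconception. ``Edge colouring'' here means \emph{proper} edge colouring, so at each outer vertex $u_i$ the two outer edges $u_{i-1}u_i$, $u_iu_{i+1}$ and the spoke $u_iv_i$ must receive three pairwise \emph{distinct} colours. Consequently there are no ``free'' spokes: if the two outer colours at $u_i$ agree, the colouring does not extend at all (and $\phi$ is not even a proper colouring of $C_O$); it is false that the spoke ``can be either of two colours''. In the only relevant case every spoke is forced to the unique third colour, the triangle edges are then forced by the (necessarily rainbow) spoke colours, and uniqueness is immediate. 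The entire ``colours-or-blanks'' case analysis on a triangle with legs is therefore both unnecessary and, as stated, wrong; carried out literally it would suggest spurious non-uniqueness. (Minor point: a triangle with three pendant legs is not $K_4$ minus a perfect matching---that graph is a $4$-cycle.)

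The more substantive gap is that the actual content of the lemma---identifying the compatibility relation with adjacency in $T$ or $H$---is asserted rather than proved. One must verify that ``the three forced spoke colours on the triangle sitting between $\phi_i$ and $\phi_{i+1}$ are pairwise distinct'' holds exactly for the pairs that are edges of $T$ or of $H$, and, crucially, that a \emph{single} permutation $(\mathsf a,\mathsf b,\mathsf c)$ serves for all $i$: you need to rule out that the sequence $\phi_1,\phi_2,\ldots$ jumps from a copy of $T$ to a copy of $H$, or between different copies of the same graph. The paper does this by first showing no $\phi_i$ is monochromatic, then arguing inductively that, given $\phi_i$, the rainbow constraint leaves exactly two admissible spoke triples, each of which forces $\phi_{i+1}$ to be one of the two neighbours of $\phi_i$ in the \emph{same} graph; a short finite check of representative cases (using the rotational and colour symmetries) completes both directions. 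That finite check is the real heart of the proof, and your proposal does not carry it out.
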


\begin{figure}[ht] 
\centering
\begin{tikzpicture}[every edge quotes/.style={},scale=1]
\tikzstyle{hvertex}=[thick,circle,inner sep=0.05cm, minimum size=2.5mm, fill=white, draw=black]
\tikzstyle{hedge}=[ultra thick]

\begin{scope}
\def\radius{1.cm}

\foreach \i in {1,2,3}{
    \draw[hedge] (90+\i*120:\radius) 
to node {} (210+\i*120:\radius);
}
\node[hvertex] (123) at (90:\radius) {$\mathsf{abc}$};
\node[hvertex] (312) at (90+120:\radius) {$\mathsf{bca}$};
\node[hvertex] (231) at (90+240:\radius) {$\mathsf{cab}$};
\node at (-1,1.5) {$T$};

\end{scope}

\begin{scope}[shift={(5,0.3)}]

\def\radius{1.2cm}

\foreach \i in {0,...,5}{
    \draw[hedge] (\i*60:\radius) 
to node {} (60+\i*60:\radius);
}
\node[hvertex] (211) at (0:\radius) {$\mathsf{aba}$};
\node[hvertex] (323) at (60:\radius) {$\mathsf{bcc}$};
\node[hvertex] (112) at (120:\radius) {$\mathsf{aab}$};
\node[hvertex] (233) at (180:\radius) {$\mathsf{cbc}$};
\node[hvertex] (121) at (240:\radius) {$\mathsf{baa}$};
\node[hvertex] (332) at (300:\radius) {$\mathsf{ccb}$};

\node at (-1.6,1.2) {$H$};

\end{scope}
\end{tikzpicture}
\caption{The graphs $T$ and $H$ capture the possible combinations of consecutive colour triples}
\label{Fig_T_and_H}
\end{figure}
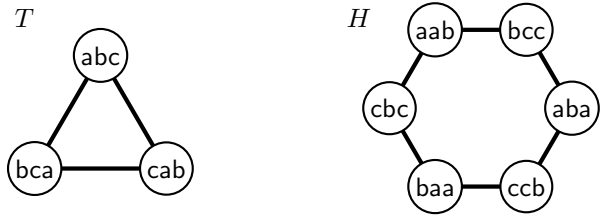

\begin{proof} 
First assume \eqref{Enum_ExistsGamma}, that is, that there is an edge colouring $\gamma$ of $GP$
with $\gamma|_{C_O}=\phi$. Note  that for all $i\in\mathbb Z_{3k}$ 
\begin{equation}\label{spokeeq}
\emtext{the spokes $u_iv_i$, $u_{i+k}v_{i+k}$ and $u_{i+2k}v_{i+2k}$
receive distinct colours.
}
\end{equation}
Indeed, the three edges of the triangle $v_iv_{i+k}v_{i+2k}v_i$
need to be assigned three different colours, which then also must be the 
case for the corresponding spokes. 

From~\eqref{spokeeq} follows that no $\phi_i$ is monochromatic, 
i.e.,that $\phi_i\notin\{ \mathsf{111,222,333}\}$. If there was such an $\phi_i$, say
$\phi_i=\mathsf{111}$, then 
none of the spokes $u_iv_i$, $u_{i+k}v_{i+k}$ and $u_{i+2k}v_{i+2k}$
could be coloured with~$\mathsf 1$ under $\gamma$. 

In particular, $\phi_1$ will contain at least two distinct colours 
and we can choose distinct $\mathsf a$, $\mathsf b$ and $\mathsf c$ so 
that $\phi_1\in\{\mathsf{abc},\mathsf{aab},\mathsf{aba},\mathsf{baa}\}$.
Then $\phi_1$ is a vertex of either $T$ or $H$.
Consider inductively $\phi_i$ to be such a vertex as well. 
By rotational  symmetry  of $GP$ and permutation of colours, we may assume 
that $\phi_i\in\{\mathsf{abc},\mathsf{aab}\}$. 

If $\phi_i=\{\mathsf{abc}\}$ then, by~\eqref{spokeeq}, the spokes 
$u_iv_i$, $u_{i+k}v_{i+k}$ and $u_{i+2k}v_{i+2k}$ can only be coloured
$\mathsf b,\mathsf c,\mathsf a$ (in that order) or 
$\mathsf c,\mathsf a,\mathsf b$. In the first case, the colour of the edge
$u_{i+1}u_{i+2}$ needs to be $\mathsf c$, and so on, resulting in $\phi_{i+1}=\mathsf{cab}$.
In the other case, we get $\phi_{i+1}=\mathsf{bca}$. Both of these
colour triples are adjacent to $\phi_i$ in~$T$.
The proof for $\phi_i=\mathsf{aab}$ is similar. 

\medskip
For the converse direction suppose now that \eqref{Enum_AdjacentVert} holds. Consider a pair
of colour triples $\phi_i$ and $\phi_{i+1}$. By rotation symmetry
of $GP$ and by symmetry of the three colours, we only need to check the cases that 
\begin{equation*}
(\phi_i,\phi_{i+1})=(\mathsf{abc},\mathsf{bca})\text{ and }
(\phi_i,\phi_{i+1})=(\mathsf{aab},\mathsf{cbc}).
\end{equation*}
In the first case, the spokes $u_iv_i$, $u_{i+k}v_{i+k}$ and $u_{i+2k}v_{i+2k}$
can be coloured with $\mathsf c$, $\mathsf a$ and $\mathsf b$ (in that order),
which then permits to colour the triangle $v_iv_{i+k}v_{i+2k}v_i$ with 
$\mathsf{abc}$. Observe that neither for the spokes nor for the triangle 
there was an alternative colouring. For the other case, colour the spokes 
with $\mathsf{bca}$ and then the triangle accordingly. Again, all the colours
are forced. Extending the colouring $\phi$ in this way for all $i$ yields 
an edge colouring $\gamma$ of all of $GP$, and as all colours are forced,
$\gamma$ is uniquely determined by $\phi$. 
\end{proof}

The lemma implies that any edge colouring $\gamma$ of $GP$ corresponds 
to a walk $\gamma_1\gamma_2\ldots\gamma_{k+1}$ 
of length~$k$ in either $T$ or in $H$. Where does such a walk start 
and end? By symmetry, we may  assume that the walk starts at $\gamma_1=\mathsf{abc}$
or $\gamma_1=\mathsf{aab}$. 
 It then ends in $\gamma_{k+1}$, which is either
$\mathsf{bca}$ or $\mathsf{aba}$. Conversely, all such walks define edge colourings
of $GP$.

To count the number of these walks, consider two vertices $x,y$ of $T$, respectively
of $H$, that are at distance~$\ell$ from each other in $T$ (resp.\ in $H$). 
We define 
\begin{align*}
t_k(\ell)  &\coloneqq \sharp \left\lbrace \text{walks of length } k \text{ between $x$ and $y$ in $T$} 
\right\rbrace \\
h_k(\ell)  &\coloneqq \sharp \left\lbrace \text{walks of length } k \text{ between } x \text{ and } y \text{ in } H \right\rbrace 
\end{align*} 
Then every edge colouring of $GP$ corresponds to a walk that is either counted in $t_k(1)$ 
(as $\mathsf{abc}$ and $\mathsf{bca}$ have distance~$1$ in $T$) or counted in $h_k(2)$.

\begin{lemma} \label{Lemma_Number_1factors_walks}
The number of $1$-factorisations of $GP(3k,k)$ equals $t_k(1) + 3 h_k(2)$.
\end{lemma}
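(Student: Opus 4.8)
The plan is to combine the walk correspondence established above with a careful accounting of the symmetries that have been quotiented out, translating between $1$-factorisations and coloured walks. Lemma~\ref{Lem_Comb_colourTriples} and the discussion following it tell us that every edge colouring $\gamma$ of $GP$ with colours $\{\mathsf{1,2,3}\}$ is determined by its restriction to $C_O$, and that restriction corresponds to a walk $\gamma_1\gamma_2\ldots\gamma_{k+1}$ in $T$ or in $H$; moreover $\gamma_{k+1}$ is the cyclic shift of $\gamma_1$. So I would first count \emph{proper edge colourings} of $GP$ with three named colours, and only at the end divide by $3!=6$ to pass from colourings to $1$-factorisations (a $1$-factorisation is a colouring up to renaming the three colours, and since $GP$ is connected and cubic with chromatic index $3$, no colouring is fixed by a non-trivial colour permutation, so this division is clean).

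First I would fix $\gamma_1$. By the proof of the previous lemma, $\gamma_1$ is a non-monochromatic triple lying in $T$ (the three ``rainbow'' triples $\mathsf{abc},\mathsf{bca},\mathsf{cab}$ and their colour-permutations, $6$ of them in total) or in $H$ (the $18$ remaining non-monochromatic, non-rainbow triples). In the $T$-case, $\gamma_{k+1}$ is the cyclic shift of $\gamma_1$, which is the unique vertex of $T$ at distance exactly $1$ from $\gamma_1$ (in the triangle $T$ every two distinct vertices are at distance $1$, and cyclic shift moves $\mathsf{abc}\mapsto\mathsf{bca}\neq\mathsf{abc}$), so such colourings are exactly the walks of length $k$ between two fixed vertices at distance $1$ in $T$: there are $t_k(1)$ of them for each of the $6$ choices of $\gamma_1$. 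In the $H$-case I would check that the cyclic shift $\gamma_1\mapsto\gamma_{k+1}$ sends, e.g., $\mathsf{aab}\mapsto\mathsf{aba}$, which are at distance $2$ in the $6$-cycle $H$ (reading off Figure~\ref{Fig_T_and_H}); by the symmetry of $H$ this distance is $2$ for every vertex and its shift. Hence each of the $18$ starting triples in $H$ contributes $h_k(2)$ walks. Thus the number of three-coloured proper edge colourings of $GP$ is $6\,t_k(1) + 18\,h_k(2)$, and dividing by $6$ gives $t_k(1) + 3\,h_k(2)$.

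The one point that needs genuine care — and which I expect to be the main obstacle — is the bookkeeping of over- and under-counting: we must be sure that (a) distinct walks give distinct colourings (immediate, since $\gamma$ is determined by $\phi=\gamma|_{C_O}$ and $\phi$ is read off the walk), (b) every colouring arises from exactly one walk with a specified start vertex (the start vertex \emph{is} $\gamma_1$, so this is automatic), and crucially (c) that the passage from colourings to $1$-factorisations really is a uniform $6$-to-$1$ map. For (c) one argues that $GP(3k,k)$, being connected with chromatic index $3$, admits no proper $3$-edge-colouring invariant under a non-identity permutation of the colour classes: such an invariant permutation would have to fix each colour class setwise, hence be the identity on a connected graph where the three matchings meet at every vertex. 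Therefore the $24$ starting triples partition into the $6$ rainbow ones and the $18$ others, the total colouring count is $6t_k(1)+18h_k(2)$, and the number of $1$-factorisations is $\tfrac{1}{6}\bigl(6t_k(1)+18h_k(2)\bigr)=t_k(1)+3h_k(2)$, as claimed. I would close by remarking that $t_k(1)$ and $h_k(2)$ are well-defined independently of the chosen endpoint pair because $T$ and $H$ are vertex- and edge-transitive, so the walk-counts depend only on the distance $\ell$.
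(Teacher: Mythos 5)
Your proof is correct and follows essentially the same route as the paper: both reduce to counting, via Lemma~\ref{Lem_Comb_colourTriples}, walks of length $k$ in $T$ (endpoints at distance $1$) and in $H$ (endpoints at distance $2$). The only difference is how the $3!$-fold colour symmetry is factored out — you count all $6t_k(1)+18h_k(2)$ proper colourings and divide by $6$ (correctly justifying that the quotient map is uniformly $6$-to-$1$), whereas the paper instead fixes one canonical colouring per $1$-factorisation by restricting $\phi_1$ to a set of four representative triples; both bookkeeping schemes are sound.
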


\begin{proof}
First, we note that there is a bijection between the $1$-factorisations and 
the edge colourings $\gamma:E\to\{\mathsf{1,2,3}\}$, where $u_1u_2$ is coloured with~$\mathsf{1}$, 
$u_{1+k}u_{2+k}$ coloured with~$\mathsf{1}$ or~$\mathsf{2}$, and $u_{1+2k}u_{2+2k}$ is only coloured
with~$\mathsf{3}$ if $u_{1+k}u_{2+k}$ was coloured with~$\mathsf{2}$. 
By Lemma~\ref{Lem_Comb_colourTriples}, the number of such $\gamma$ is equal to the 
number of edge colourings $\phi:E(C_O)\to\{\mathsf{1,2,3}\}$ satisfying~\eqref{Enum_AdjacentVert} of Lemma~\ref{Lem_Comb_colourTriples} and for which $\phi_1\in\{\mathsf{123,112,121,211}\}$.

How many such edge colourings $\phi$ are there with $\phi_1=\mathsf{123}$? 
Since $\phi_{k+1}=\mathsf{231}$, Lemma~\ref{Lem_Comb_colourTriples} implies that this number 
is $t_k(1)$. Each of the numbers of edge colourings $\phi$ with $\phi_1\in\{\mathsf{112,121,211}\}$
is equal to $h_2(2)$, which means that, in total, we get $t_k(1)+3h_k(2)$ edge colourings.
\end{proof}

We need the closed expression of the Jacobsthal numbers:
\begin{align} \label{Eq_Jacob_Recur}
J(k)&=\tfrac{1}{3} \left(2^{k} +(-1)^{k+1}\right) &\text{ for every } k\geq 0.
\end{align}

\begin{lemma} \label{Lemma_Number_walks_C3}
For any $k$
\[
t_k(0)= \tfrac{1}{3}( 2^k  + 2 (-1)^k)
\text{ and }
t_k(1)  = J(k)
\]
\end{lemma}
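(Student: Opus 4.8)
The graph $T$ is a triangle $C_3$ on the three vertices $\mathsf{abc}$, $\mathsf{bca}$, $\mathsf{cab}$, so counting walks in $T$ is counting walks in $C_3$. The plan is to diagonalise the adjacency matrix $A$ of $C_3$, or equivalently to set up a tiny recursion. Since $C_3 = K_3$, the adjacency matrix satisfies $A^2 = A + 2I$ (every entry of $A^2$ is $1$ off the diagonal and $2$ on it), so the number of walks of any fixed length from a fixed start vertex to a fixed target vertex satisfies the recurrence $w_{k} = w_{k-1} + 2 w_{k-2}$ — the Jacobsthal recurrence. Alternatively one notes that $t_k(0) + 2 t_k(1)$ counts all walks of length $k$ from a fixed vertex, which is $2^k$, while $t_k(0) - t_k(1)$ is (up to sign) the contribution of the eigenvalue $-1$ of $K_3$, giving $t_k(0) - t_k(1) = (-1)^k$. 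Let me take the eigenvalue route since it is cleanest.

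First I would record that $C_3$ is a $2$-regular graph whose adjacency matrix has eigenvalue $2$ (with the all-ones eigenvector) and eigenvalue $-1$ with multiplicity $2$. By the spectral formula, the number of walks of length $k$ between two vertices $x,y$ is $\tfrac{1}{3}\cdot 2^k + c_{xy}\cdot(-1)^k$, where $c_{xy}$ depends only on whether $x=y$ or $x\neq y$: using $\sum$ over the orthonormal eigenbasis, $c_{xx} = \tfrac{2}{3}$ and $c_{xy} = -\tfrac{1}{3}$ for $x\neq y$ (the two values must satisfy $c_{xx} + 2c_{xy} = 0$ since row sums of $(-1)$-eigenprojection vanish, and $c_{xx} - c_{xy} = 1$ since the projection is $I - \tfrac13 J$). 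This gives
\[
t_k(0) = \tfrac{1}{3}\bigl(2^k + 2(-1)^k\bigr), \qquad t_k(1) = \tfrac{1}{3}\bigl(2^k - (-1)^k\bigr) = \tfrac{1}{3}\bigl(2^k + (-1)^{k+1}\bigr),
\]
and the right-hand side for $t_k(1)$ is exactly $J(k)$ by the closed form~\eqref{Eq_Jacob_Recur}.

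If one prefers to avoid invoking the spectral decomposition as a black box, the same two identities can be gotten by hand: $t_k(0) + 2t_k(1) = 2^k$ because the left side counts all walks of length $k$ starting at a fixed vertex of the $2$-regular graph $C_3$; and $t_k(0) - t_k(1) = (-1)^k$ follows by an immediate induction from the transfer relations $t_{k}(0) = 2\,t_{k-1}(1)$ and $t_{k}(1) = t_{k-1}(1) + t_{k-1}(0)$ (a walk ending at $x$ had its previous vertex among the two neighbours of $x$; a walk ending at $y\ne x$ had its previous vertex at $x$ or at the third vertex). Solving the $2\times 2$ linear system for $t_k(0)$ and $t_k(1)$ yields the claimed formulas, and comparison with~\eqref{Eq_Jacob_Recur} identifies $t_k(1)$ with $J(k)$. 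There is essentially no obstacle here — the only thing to be a little careful about is the base case ($k=0$: $t_0(0)=1$, $t_0(1)=0$, consistent with both formulas) and making sure the distance-$\ell$ quantities $t_k(\ell)$ are well defined, i.e.\ that the walk count in $C_3$ depends only on the distance between the endpoints, which holds by vertex-transitivity of $C_3$.
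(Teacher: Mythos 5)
Your proof is correct and follows essentially the same route as the paper: both arguments rest on the spectrum $\{2,-1,-1\}$ of the triangle's adjacency matrix, the paper obtaining $t_k(0)$ from the closed-walk trace formula and then $t_k(1)$ via the relation $t_{k+1}(0)=2t_k(1)$, while you read both quantities directly off the spectral projection $A^k=2^k\tfrac13 J+(-1)^k(I-\tfrac13 J)$ (or, equivalently, your $2\times 2$ system $t_k(0)+2t_k(1)=2^k$, $t_k(0)-t_k(1)=(-1)^k$). The elementary variant you sketch is a pleasant self-contained alternative to citing the trace formula, but the substance is identical.
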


The second equation can be found in \cite{Barry07}. Since it follows directly from the first one, 
we will still include a proof.

\begin{proof}
A classic question in algebraic graph theory is to count the number of
pairs $(W,v)$ for a graph $G$, where $W$ is a closed walk of length~$k$ in a graph~$G$ and $v$
the first vertex of $W$. It turns out, see for instance~\cite[Section 1.4]{BeiWil}, 
that this number is equal to $\lambda_1^k + \cdots + \lambda_n^k$, 
where $\lambda_1,\ldots,  \lambda_n$ are the eigenvalues of 
the adjacency matrix of $G$. For the triangle 
these eigenvalues are $2$ and twice $-1$ (see e.g.\ \cite[Section 1.2]{BeiWil}). 
Our aim, however, is to count the number of closed walks of length~$k$ that start
at a specific vertex, which means that we have to divide by $3$. 
This gives $t_k(0)= \tfrac{1}{3}( 2^k  + 2 (-1)^k)$
 for every $k$, and thus the first assertion of the lemma.

Since every walk on $k+1$ edges has to visit a vertex adjacent to the end vertex after the $k$-th step,
for which there are two possibilities, we obtain $t_{k+1}(0)= 2t_k(1)$ and thus
\begin{align*}
2t_k(1)  =t_{k+1}(0)= \tfrac{1}{3} \left(2^{k+1} +2 (-1)^{k+1}\right)
\end{align*}
By~\eqref{Eq_Jacob_Recur}, we get $2t_k(1)=2J(k)$ for every $k$.
\end{proof}

For the next lemma, we label the vertices of $T$ as $x_0, x_1, x_2$ in clockwise order. 
The vertices of $H$ are  $y_0, y_1, y_2, z_0, z_1, z_2$ in clockwise order. 

\begin{lemma} \label{Lemma_Bijection_undir_walks}
If $k$ is even, there is a bijection between the set of walks of length $k$ from $x_0$ to $x_2$ in $T$ and the set of walks from $y_0$ to $y_2$ in $H$.
Moreover, 
the $\ell$th edge of the walk in $T$ is traversed in clockwise direction if and only if 
this is also the case for $\ell$th edge of the corresponding walk in $H$.
\end{lemma}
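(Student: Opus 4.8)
The plan is to exploit the fact that $H$ is a $6$-cycle while $T$ is a $3$-cycle (triangle), and that the natural ``folding'' map from the $6$-cycle to the $3$-cycle is exactly the kind of map that interacts well with parity of walk length. Label the vertices of $H$ cyclically as $y_0, z_1, y_2, z_0, y_1, z_2$ — that is, reading the figure clockwise, the $y$'s and $z$'s alternate — and label those of $T$ as $x_0, x_1, x_2$ clockwise. Define $\pi: V(H)\to V(T)$ by $\pi(y_j)=\pi(z_j)=x_j$. First I would check that $\pi$ sends a clockwise edge of $H$ to a ``clockwise step'' in $T$ and a counterclockwise edge of $H$ to a counterclockwise step, where a clockwise step in the triangle means $x_j\mapsto x_{j+1}$ (indices mod $3$). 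This is immediate from the labelling once one verifies that consecutive vertices of $H$ have indices differing by $\pm 1$ mod $3$; this follows from inspecting Figure~\ref{Fig_T_and_H}, noting e.g.\ that $\mathsf{abc},\mathsf{bcc},\mathsf{aab},\mathsf{cbc},\mathsf{baa},\mathsf{ccb}$ correspond under the ``position of the repeated colour, or the odd-one-out'' bookkeeping to indices $0,1,2,0,1,2$.

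The map on walks is then: a walk $W$ in $H$ starting at $y_0$ is a sequence of $\pm 1$ increments; apply $\pi$ vertexwise to get a walk $\pi(W)$ in $T$ starting at $x_0$, with the same sequence of clockwise/counterclockwise edge directions. This is clearly well defined and direction-preserving. For injectivity and surjectivity I would argue as follows. A walk in $T$ of length $k$ from $x_0$ is determined by its sign sequence $(\varepsilon_1,\dots,\varepsilon_k)\in\{+1,-1\}^k$ (each $\varepsilon_\ell$ being $+1$ for a clockwise step), and it ends at $x_{\sigma}$ where $\sigma \equiv \sum_\ell \varepsilon_\ell \pmod 3$. Likewise a walk in $H$ from $y_0$ is determined by its sign sequence and ends at the vertex obtained by $\sum_\ell \varepsilon_\ell$ steps clockwise around the $6$-cycle, i.e.\ at ``position $\sum_\ell \varepsilon_\ell \bmod 6$''. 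So $\pi$ restricts to a bijection between $\{$walks of length $k$ from $x_0$ to $x_2$ in $T\}$ and $\{$walks of length $k$ from $y_0$ in $H$ ending at a vertex of position $\equiv 2\pmod 3$, i.e.\ at $y_2$ or $z_2\}$, and it suffices to show that when $k$ is even, position $5$ (the vertex $z_2$) cannot be reached, so the only option is $y_2$.

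The parity observation is the crux, and it is short: $\sum_{\ell=1}^{k}\varepsilon_\ell$ has the same parity as $k$, since each $\varepsilon_\ell$ is odd. When $k$ is even, $\sum_\ell \varepsilon_\ell$ is even, so its residue mod $6$ is one of $0,2,4$ — in particular it is never $5$, the position of $z_2$. Hence every length-$k$ walk in $T$ from $x_0$ to $x_2$ lifts uniquely to a length-$k$ walk in $H$ from $y_0$ to $y_2$ with the identical sequence of edge directions, and conversely every such walk in $H$ projects down; this is the claimed bijection. I expect the main obstacle to be purely bookkeeping: pinning down which of the six vertices of $H$ carries which index so that $\pi$ really is a direction-preserving covering map — once the cyclic labelling of $H$ is chosen to alternate $y_0, z_1, y_2, z_0, y_1, z_2$ (or whatever the figure dictates), everything else is the trivial parity argument above. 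I would state the index assignment explicitly and verify the six edges of $H$ one by one, since the whole lemma rests on it.
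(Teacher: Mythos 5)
Your proposal is correct and is essentially the paper's argument: both exploit that the index-forgetting projection $H\to T$ is a double cover preserving clockwise/counterclockwise steps, lift (or project) walks accordingly, and finish with the parity observation that an even-length walk from $y_0$ cannot end at $z_2$ because that vertex sits at odd distance (odd position on the $6$-cycle). The only discrepancy is cosmetic: the paper labels $H$ as $y_0,y_1,y_2,z_0,z_1,z_2$ clockwise rather than alternating $y$'s and $z$'s, but $y_0$, $y_2$ and $z_2$ occupy positions $0$, $2$ and $5$ under either convention, so your computation goes through unchanged.
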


The projection of the set $\lbrace  y_i, z_i \rbrace$ in $H$ to the set $\lbrace  x_i \rbrace$ in $T$ for $i \in \Zz_3$ yields a covering map. The bijection between the considered walks of length $k$ in $T$ and $H$ follows immediately by the path lifting property for covering spaces (see e.g.\ \cite[Section 1.3]{Hatcher}). 
We  give, nevertheless, an elementary proof of the lemma.

\begin{proof}[Proof of Lemma~\ref{Lemma_Bijection_undir_walks}]
In $H$, every vertex with index $i \in \Zz_3$ in $H$ is adjacent to exactly one vertex with index $i+1$ in clockwise direction and one with index $i-1$ in counter-clockwise direction. Therefore, the following rule translates a walk $W=w_0\ldots w_k$ in $T$ starting in $x_0$
to a walk $W'=w'_0\ldots w'_k$ in $H$ starting in $y_0$ while maintaining the directions of edge traversals:
let $w'_0=y_0$;
for $\ell=1,\ldots,k$,
if $w_{\ell}=x_i$ then pick $w'_\ell$ to be the neighbour
of $w'_{\ell-1}$ among $y_i,z_i$. 

As $w_{k+1}=x_2$, the last vertex $w'_{k+1}$ of $W'$ has to be one of  $y_2, z_2$.
Since $k$ is even and the distance between $y_0$ and $z_2$ in $H$ is odd, $W'$ must terminate in~$y_2$. 
\end{proof}

\begin{proof}[Proof of Theorem~\ref{Thm_Number_1fact}]
By Lemma~\ref{Lemma_Number_1factors_walks} the number of $1$-factorisations of $GP(3k,k)$ is $t_k(1) +3 h_k(2)$. 
Lemma~\ref{Lemma_Bijection_undir_walks} yields that $h_k(2)$ equals $t_k(1)$ for even $k$. Since there is no walk of odd length in $H$ that connects two vertices of even distance, $h_k(2)$ is zero for odd $k$. By Lemma~\ref{Lemma_Number_walks_C3} the number of $1$-factorisations now equals $J(k) + 3J(k)$ if $k$ is even and $J(k) +3\cdot 0$ otherwise.
\end{proof}

\section{List edge colouring}

In order to show the list edge conjecture for $GP=GP(3k,k)$, we will 
use the method of Alon and Tarsi \cite{A_Tar92}, 
or rather its specialisation to regular graphs \cite{EG96}.

To define a local rotation, we consider $GP(3k,k)$ always to be drawn as  in Figure~\ref{duererfig}: 
the vertices $u_i$ for $i=1, \ldots, 3k$ are placed on an outer circle in clockwise order, 
the vertices $v_i$ for $i=1, \ldots, 3k$ on a smaller concentric circle in such a way 
that $u_i$ and $v_i$ match up, and all edges are straight.
We define the sign of $\gamma$ at a vertex $w$ as $+$ if the colours $\mathsf{1}, \mathsf{2}, \mathsf{3}$ appear in clockwise order on the incident edges; otherwise the sign is $-$.
More formally, 
\begin{align*}
\sign_{\gamma}(u_i) &=
\begin{cases}
+  & \text{ if }\left( \gamma(u_{i-1}u_i), \gamma(u_i u_{i+1}), \gamma(u_iv_i) \right) \in \lbrace \mathsf{123} ,   \mathsf{231} , \mathsf{312} \rbrace \\
-  & \text{ otherwise}.
\end{cases} \\
\sign_{\gamma}(v_i) &=
\begin{cases}
+  & \text{ if }\left( \gamma(v_{k+i}v_i), \gamma(v_{i}v_{2k+i}), \gamma(v_iu_i) \right) \in \lbrace \mathsf{123} ,   \mathsf{231}, \mathsf{312} \rbrace \\
-  & \text{ otherwise}
\end{cases} 
\end{align*}
The sign of the colouring $\gamma$ is then
\begin{equation*} 
\sign(\gamma) \coloneqq \prod\limits_{v \in V(GP)} \sign_{\gamma}(v).
\end{equation*}

Permuting colours in our context does not change the 
sign of an edge colouring. This is true in all regular graphs, see for instance~\cite{EG96}:

\begin{lemma}\label{colourpermlem}
Let $G$ be a $d$-regular graph, and let $\gamma$ be an edge colouring of $G$ with
$d$ colours. If $\gamma'$ is obtained from $\gamma$ by exchanging two colours, 
then $\sign(\gamma)=\sign(\gamma')$. 
\end{lemma}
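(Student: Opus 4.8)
The plan is to reduce the global statement to a purely local computation at each vertex, and there to observe that swapping two of the three colours reverses the cyclic order at that vertex, hence flips every local sign. First I would fix the $d$-regular graph $G$, an edge colouring $\gamma$ with colour set $\{\mathsf1,\ldots,\mathsf d\}$, and a transposition of two colours, say $\mathsf a\leftrightarrow\mathsf b$; write $\gamma'$ for the resulting colouring. Since $\sign(\gamma)=\prod_{v}\sign_\gamma(v)$ and likewise for $\gamma'$, it suffices to compare $\sign_\gamma(v)$ with $\sign_{\gamma'}(v)$ at each vertex $v$ and then multiply the comparisons together.

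The key step is the local analysis. Fix a vertex $v$ with its rotation system (the cyclic order of incident edges given by the plane drawing). In the $GP(3k,k)$ setting the sign is defined only via the positions of colours $\mathsf 1,\mathsf 2,\mathsf 3$, so I would phrase the general statement in the form used in~\cite{EG96}: $\sign_\gamma(v)$ records whether the colours, read off the edges around $v$ in the fixed rotation, form an even or an odd permutation of the reference order $(\mathsf1,\ldots,\mathsf d)$. Swapping $\mathsf a$ and $\mathsf b$ in $\gamma$ composes this permutation with the transposition $(\mathsf a\,\mathsf b)$, which changes its parity; hence $\sign_{\gamma'}(v)=-\sign_\gamma(v)$ at \emph{every} vertex $v$.

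Finally I would combine the $|V(G)|$ local sign flips: $\sign(\gamma')=\prod_{v}\sign_{\gamma'}(v)=\prod_{v}(-\sign_\gamma(v))=(-1)^{|V(G)|}\sign(\gamma)$. It remains to note that a $d$-regular graph with a proper $d$-edge-colouring has an even number of vertices — indeed $d\,|V(G)|=2|E(G)|$, and moreover each colour class is a perfect matching, so $|V(G)|$ is even outright. Therefore $(-1)^{|V(G)|}=1$ and $\sign(\gamma')=\sign(\gamma)$, as claimed. A general permutation of colours is a product of transpositions, so the statement extends to arbitrary colour permutations by iterating.

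The main obstacle is mostly bookkeeping rather than conceptual: one must make sure the sign-at-a-vertex is genuinely a parity of the permutation induced by the rotation (for $d>3$ this is the natural generalisation of the ``clockwise $\mathsf1,\mathsf2,\mathsf3$'' rule, and it is the definition used in~\cite{EG96}), so that a single transposition of colours flips it at \emph{every} vertex simultaneously; once that is pinned down, the parity count of $|V(G)|$ closes the argument cleanly.
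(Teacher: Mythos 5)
Your argument is correct and is essentially the one the paper gives: a transposition of two colours flips the local sign at every vertex, and the number of vertices is even, so the product is unchanged. The only difference is that the paper spells this out just for odd $d$ (where evenness of $|V(G)|$ follows from the handshake lemma) and otherwise defers to~\cite{EG96}, whereas you correctly close the even-$d$ case as well by observing that each colour class is a perfect matching.
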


For $d$-regular graphs with odd $d$, such as cubic graphs, 
Lemma~\ref{colourpermlem} is easy to see: the signs of $\gamma$ and $\gamma'$ differ at every vertex of $G$,
and there is an even number of vertices in total.

Lemma~\ref{colourpermlem} allows to define a sign $\sign f$ for any $1$-factorisation $f$ 
by fixing it to the sign 
of any edge colouring that induces $f$. The Alon-Tarsi colouring criterion now takes a particularly simple form in $d$-regular graphs; see Ellingham and Goddyn~\cite{EG96} or Alon~\cite{Alo93}.

\begin{theorem} \label{Thm_ElGo}
Let $G$ be a $d$-regular graph with 
\begin{equation*}
\sum\limits_{f \, 1\text{-factor of }G} \sign(f) \not= 0.
\end{equation*} 
Then, $G$ is $d$-list-edge-colourable.
\end{theorem}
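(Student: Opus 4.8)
The plan is to derive Theorem~\ref{Thm_ElGo} from the Combinatorial Nullstellensatz of Alon and Tarsi~\cite{A_Tar92}, specialised to the line graph of $G$. The starting point is the graph polynomial of the line graph $L(G)$: introducing a variable $x_e$ for every edge $e$ of $G$, and fixing the reference rotation $\prec_v$ at each vertex $v$ that underlies the definition of $\sign$, I would set
\[
P \;=\; \prod_{v \in V(G)} \ \prod_{\substack{e \prec_v f \\ e,f \ni v}} (x_e - x_f).
\]
Since $G$ is simple, two incident edges meet in exactly one vertex, so each adjacency of $L(G)$ contributes exactly one factor and $P$ is, up to a global sign, the graph polynomial of $L(G)$. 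An assignment $x_e \in L_e$ makes $P$ nonzero precisely when it is a proper list edge colouring, so by the Combinatorial Nullstellensatz it suffices to exhibit a single monomial $\prod_e x_e^{t_e}$ with $t_e \le |L_e| - 1 = d-1$ and nonzero coefficient in $P$.

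The natural candidate is $\prod_e x_e^{d-1}$. Its total degree is $|E(G)|(d-1) = |V(G)|\binom{d}{2}$, which matches $\deg P$, so it lies in the top-degree part and its coefficient can be read off from a factored form. The key structural observation is that $P$ splits as $\prod_{v} V_v$, where $V_v$ is the Vandermonde determinant in the $d$ variables $\{x_e : e \ni v\}$; thus every monomial of $V_v$ carries the exponents $0,1,\dots,d-1$ on the edges at $v$ in some order, with coefficient equal to the sign of the corresponding permutation relative to $\prec_v$. Collecting the coefficient of $\prod_e x_e^{d-1}$ therefore amounts to choosing, at every vertex, a bijection from its incident edges to $\{0,\dots,d-1\}$ so that the two exponents assigned to each edge sum to $d-1$, and summing the products of the local permutation signs.

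The crux, and the step I expect to be the main obstacle, is to identify this signed count with $\sum_f \sign(f)$. Each admissible family of local bijections corresponds to a proper $d$-edge-colouring, that is, to a $1$-factorisation $f$, and the product of local Vandermonde signs should reproduce exactly the rotation sign $\sign(f)$ read off the drawing; here Lemma~\ref{colourpermlem} is what guarantees that this sign is well defined on $f$ rather than on a particular colouring inducing it. Carrying out this bookkeeping carefully, namely matching the complementary pairing of exponents across each edge to genuine colourings and verifying that the permutation signs combine into the geometric sign without a spurious global factor, is the delicate part; it is precisely the reduction established by Ellingham and Goddyn~\cite{EG96} (see also Alon~\cite{Alo93}). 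Granting it, the hypothesis $\sum_f \sign(f) \neq 0$ says exactly that the coefficient of $\prod_e x_e^{d-1}$ in $P$ is nonzero, and the Combinatorial Nullstellensatz then produces a proper colouring from any lists of size $d$, so that $G$ is $d$-list-edge-colourable.
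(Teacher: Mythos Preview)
The paper itself does not prove Theorem~\ref{Thm_ElGo}; it simply quotes it from Ellingham and Goddyn~\cite{EG96} and Alon~\cite{Alo93}. Your sketch follows the route behind those references: apply the Combinatorial Nullstellensatz to the graph polynomial of $L(G)$, factor it as a product of local Vandermondes $\prod_v V_v$, and argue that the coefficient of $\prod_e x_e^{d-1}$ equals the signed count of $1$-factorisations. In broad strokes you are reproducing the cited argument rather than offering an alternative.

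There is, however, a genuine inaccuracy in your description of the crux. You write that each admissible family of local bijections --- tuples $(\sigma_v)_v$ with $\sigma_u(e)+\sigma_v(e)=d-1$ on every edge $e=uv$ --- ``corresponds to a proper $d$-edge-colouring''. This is false as stated: the triangle ($d=2$) admits two such families but no proper $2$-edge-colouring, and in general admissible families are \emph{not} in bijection with edge colourings of~$G$. What is true, and what the Ellingham--Goddyn reduction actually establishes, is that the \emph{signed} sum over admissible families equals (up to a global sign) the signed sum over proper edge colourings; the surplus families cancel in pairs. Since you ultimately defer the ``delicate bookkeeping'' to~\cite{EG96} your conclusion survives, but the obstacle lies one step earlier than where you placed it, and a reader following your outline literally would be led astray.
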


Applying Theorem~\ref{Thm_ElGo} to  $GP(3k,k)$ with odd $k$, 
we can now see that the list edge-colouring conjecture holds:

\begin{corollary} \label{Cor_Number_pos_neg_1F_k_odd}
For odd $k$, the  graph $GP(3k,k)$ has choice index $3$.
\end{corollary}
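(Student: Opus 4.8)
The plan is to apply Theorem~\ref{Thm_ElGo} with $d=3$, so it suffices to show that the signed count $\sum_{f} \sign(f)$ of $1$-factorisations of $GP(3k,k)$ is nonzero for odd $k$. Since the number of $1$-factorisations is finite and, by Theorem~\ref{Thm_Number_1fact}, equals $J(k)$ for odd $k$, the cleanest strategy is to prove that \emph{every} $1$-factorisation has the same sign; then the signed sum is $\pm J(k)$, which is nonzero because $J(k) = \tfrac13(2^k + 1) \ge 1$ for $k \ge 1$ by the closed formula~\eqref{Eq_Jacob_Recur}. Combined with the fact that every $GP(3k,k)$ other than the Petersen graph (which is not of this form for $k\ge 3$, and $GP(3,1)$, $GP(6,2)$ are handled directly) has chromatic index exactly~$3$, this pins the choice index to~$3$.

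To show all $1$-factorisations share a sign, I would use the bijection from Lemma~\ref{Lem_Comb_colourTriples}: each edge colouring $\gamma$ of $GP$ is determined by, and corresponds to, a walk $\gamma_1 \gamma_2 \ldots \gamma_{k+1}$ in the graph $T$ (for odd $k$ the graph $H$ contributes nothing, since $h_k(2)=0$ as noted in the proof of Theorem~\ref{Thm_Number_1fact}). So I only need to track how $\sign(\gamma)$ depends on such a $T$-walk. By Lemma~\ref{colourpermlem}, permuting colours does not change the sign, so I may normalise the walk to start at a fixed vertex, say $\gamma_1 = \mathsf{abc}$, and I only need to understand the contribution to the sign of each step of the walk. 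The key computation is: for a single ``block'' $i$ — the subgraph consisting of the three outer edges recorded in $\phi_i$, the three spokes $u_iv_i, u_{k+i}v_{k+i}, u_{2k+i}v_{2k+i}$, the inner triangle, and the transition to $\phi_{i+1}$ — determine the product of the six vertex signs at $v_i, v_{k+i}, v_{2k+i}$ (and the three outer vertices $u_i, u_{k+i}, u_{2k+i}$, whichever bookkeeping is cleanest) as a function of which edge of $T$ the step $\gamma_i \to \gamma_{i+1}$ uses, i.e. its direction (clockwise vs counter-clockwise in $T$). Because in the proof of Lemma~\ref{Lem_Comb_colourTriples} all spoke and triangle colours are \emph{forced} by the pair $(\phi_i, \phi_{i+1})$, this is a finite check: by rotational symmetry of $T$ and the colour symmetry, there are essentially just the two cases $(\phi_i,\phi_{i+1}) = (\mathsf{abc},\mathsf{bca})$ and $(\mathsf{abc},\mathsf{cab})$ to verify, corresponding to the two orientations of a $T$-edge.

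I expect the outcome of that local check to be that each clockwise step contributes a fixed sign $\varepsilon_+$ and each counter-clockwise step a fixed sign $\varepsilon_-$, with $\varepsilon_+ \varepsilon_- = +1$ (or more precisely that the total sign depends only on the number of clockwise minus counter-clockwise steps modulo something, which for a \emph{closed} walk in the triangle is controlled). Since the walk from $\mathsf{abc}$ to $\mathsf{bca}$ has length exactly $k$ and lives in the $3$-cycle $T$, the difference (clockwise steps) $-$ (counter-clockwise steps) is congruent to $\pm 1$ modulo $3$ and, crucially, has fixed parity equal to that of $k$; together with a fixed global contribution from the outer-cycle vertices $u_i$, this forces $\sign(\gamma)$ to be the same for all such walks. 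If the local check instead reveals that two sub-cases of $T$-walks give opposite signs, the fallback is to compute the signed sum directly as a signed walk count / transfer-matrix trace over $T$ with signed edge weights, showing it equals $\pm t_k(1) = \pm J(k) \neq 0$; either way the conclusion follows.

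The main obstacle is the local sign computation: one must set up the planar embedding carefully (the outer vertices $u_i$ in clockwise order on the outer circle, inner vertices $v_i$ matched radially, straight edges — exactly as specified before Lemma~\ref{colourpermlem}) and correctly read off, for a forced block colouring, the cyclic order of colours $\mathsf{1,2,3}$ around each of the six vertices of a block, including the somewhat delicate inner triangle whose edges $v_iv_{k+i}, v_{k+i}v_{2k+i}, v_{2k+i}v_i$ wind around the centre. Getting the orientation conventions consistent here, and confirming that the per-step sign contributions multiply to something independent of the particular $T$-walk of length $k$ from $\mathsf{abc}$ to $\mathsf{bca}$, is the crux; everything else is assembling Lemmas~\ref{Lem_Comb_colourTriples}, \ref{colourpermlem} and Theorems~\ref{Thm_Number_1fact}, \ref{Thm_ElGo} together with the elementary bound $J(k)\ge 1$.
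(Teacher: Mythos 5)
Your proposal misses the one\-/line argument the paper actually uses, and the route you sketch in its place does not work as stated. The paper's proof of this corollary is a pure parity observation: by Theorem~\ref{Thm_Number_1fact} the number of $1$-factorisations is $J(k)=\tfrac{1}{3}(2^k+1)$ for odd $k$, and this number is \emph{odd} (since $2^k+1$ is odd). A sum of an odd number of terms, each equal to $\pm 1$, cannot vanish, so $\sum_f \sign(f)\neq 0$ and Theorem~\ref{Thm_ElGo} applies. No sign computation of any kind is needed for odd $k$.

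Your primary strategy --- showing that every $1$-factorisation has the same sign --- is in fact false for odd $k\geq 5$. Using the signed counts the paper later derives (Lemma~\ref{Lem_Number_pos_neg_walks_T}), for $k=5$ one finds $t_5^+(2)=1$ and $t_5^-(2)=10$: one positive and ten negative $1$-factorisations. The local check you propose would reveal that the two orientations of a $T$-edge contribute \emph{opposite} signs (this is exactly the content of $T_\pm$ in Figure~\ref{Fig_Tpm_and_Hpm}), and the total sign of a walk of length $k$ from $\mathsf{abc}$ to $\mathsf{bca}$ genuinely depends on the walk, not only on $k$. Your fallback --- computing the signed walk count by a transfer matrix --- would eventually succeed in showing the signed sum is nonzero, but your asserted value $\pm t_k(1)=\pm J(k)$ is also wrong: the signed sum equals $t_k^+(2)-t_k^-(2)=\pm 3^{(k-1)/2}$ (e.g.\ $-9\neq\pm 11$ for $k=5$). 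Moreover, carrying that computation out is essentially the entire machinery of Section~3, which the paper deploys only because the even case forces it; for odd $k$ the parity of $J(k)$ settles everything immediately.
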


\begin{proof}
By Theorem~\ref{Thm_Number_1fact}, $GP(3k,k)$ has $J(k)= \tfrac{2^k +1}{3}$  distinct $1$-factorisations, 
if $k$ is odd. 
Since this number is odd, the sum of the signs of all $1$-factorisations cannot be zero. Theorem~\ref{Thm_ElGo} finishes the proof.
\end{proof}

Unfortunately, for even $k$ 
the number of $1$-factorisations is even. 
That means, we have to put a bit more effort into showing that the sum of the signs of all $1$-factorisations is not zero. In particular, we will need to count the 
positive and negative $1$-factorisations separately.

As a first step, we refine the colour triple graphs $T$ and $H$, and endow 
them with signs on the edges. Figure~\ref{Fig_Tpm_and_Hpm}
shows the graphs $T_\pm$ and $H_\pm$, which we obtain from $T$ and $H$ by 
replacing each edge by two inverse directed edges, each having a sign. 
Note that in $T_\pm$ all edges in clockwise direction are positive, 
while clockwise edges in $H_\pm$ are negative. 

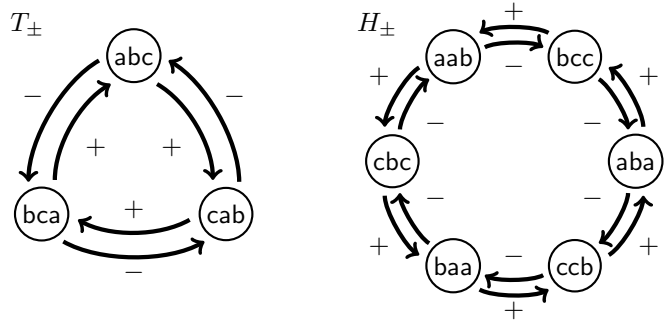
\begin{figure}[ht]
\centering
\begin{tikzpicture}[every edge quotes/.style={},scale=1]
\tikzstyle{hvertex}=[thick,circle,inner sep=0.05cm, minimum size=2.5mm, fill=white, draw=black]
\tikzstyle{hedge}=[ultra thick]

\begin{scope}
\def\radius{1.4cm}
\def\orad{1.6cm}
\def\irad{1.2cm}

\node[hvertex] (123) at (90:\radius) {$\mathsf{abc}$};
\node[hvertex] (312) at (90+120:\radius) {$\mathsf{bca}$};
\node[hvertex] (231) at (90+240:\radius) {$\mathsf{cab}$};

\foreach \i in {1,2,3}{
    \draw[hedge,->,shorten <=15pt, shorten >=15pt,bend right,auto,swap] (90+\i*120:\orad) 
to node {$\mathbf{-}$} (210+\i*120:\orad);
    \draw[hedge,<-,shorten <=10pt, shorten >=10pt,bend right,auto] (90+\i*120:\irad) 
to node {$\mathbf{+}$} (210+\i*120:\irad);
}
\node at (-1.4,1.8) {$T_\pm$};
\end{scope}

\begin{scope}[shift={(5,0)}]

\def\radius{1.6cm}
\def\orad{1.7cm}
\def\irad{1.5cm}

\node[hvertex] (211) at (0:\radius) {$\mathsf{aba}$};
\node[hvertex] (323) at (60:\radius) {$\mathsf{bcc}$};
\node[hvertex] (112) at (120:\radius) {$\mathsf{aab}$};
\node[hvertex] (233) at (180:\radius) {$\mathsf{cbc}$};
\node[hvertex] (121) at (240:\radius) {$\mathsf{baa}$};
\node[hvertex] (332) at (300:\radius) {$\mathsf{ccb}$};

\foreach \i in {0,...,5}{
    \draw[hedge,->,shorten <=13pt, shorten >=13pt,bend right,auto,swap] (\i*60:\orad) 
to node {$\mathbf{+}$} (60+\i*60:\orad);
    \draw[hedge,<-,shorten <=12pt, shorten >=12pt,bend right,auto] (\i*60:\irad) 
to node {$\mathbf{-}$} (60+\i*60:\irad);
}
\node at (-1.8,1.8) {$H_\pm$};

\end{scope}
\end{tikzpicture}
\caption{Signs of the possible combinations of consecutive colour triples}
\label{Fig_Tpm_and_Hpm}
\end{figure}

Let $x,y$ be two adjacent vertices in $T_\pm$ or in $H_\pm$. We denote
the \emph{sign  of the edge} pointing from $x$ to $y$ by $\sign(x,y)$. 
The next lemma shows that the signs on the edges capture the signs of 
 edge colourings. 

\begin{lemma} \label{Lemma_signs_sigma_i}
Let $\gamma:E(GP)\to \{\mathsf{1,2,3}\}$ be an edge colouring of $GP$, 
and let $(\mathsf a,\mathsf b,\mathsf c)$ be a 
permutation of  $(\mathsf{1,2,3})$ so that $\gamma_1$ is a vertex in $T_\pm$
or in $H_\pm$. Then
\[
\sign(\gamma)=\prod_{i=1}^k \sign(\gamma_i,\gamma_{i+1}).
\]
\end{lemma}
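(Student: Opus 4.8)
The plan is to decompose the product $\prod_{v\in V(GP)}\sign_\gamma(v)$ according to the triangles $\{u_i,u_{k+i},u_{2k+i},v_i,v_{k+i},v_{2k+i}\}$ and show that, for each $i=1,\dots,k$, the contribution of a suitable group of six vertices equals $\sign(\gamma_i,\gamma_{i+1})$. Concretely, I would define, for $i=1,\dots,k$,
\begin{equation*}
P_i \coloneqq \sign_\gamma(u_{i+1})\,\sign_\gamma(u_{k+i+1})\,\sign_\gamma(u_{2k+i+1})\,\sign_\gamma(v_i)\,\sign_\gamma(v_{k+i})\,\sign_\gamma(v_{2k+i}),
\end{equation*}
so that $\prod_{i=1}^k P_i=\prod_{v\in V(GP)}\sign_\gamma(v)=\sign(\gamma)$, since every outer vertex $u_j$ and every inner vertex $v_j$ occurs exactly once across all the $P_i$ (the $u$-indices $i+1,k+i+1,2k+i+1$ for $i=1,\dots,k$ run over all of $\Zz_{3k}$, and likewise for the $v$-indices). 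The claim then reduces to the purely local identity $P_i=\sign(\gamma_i,\gamma_{i+1})$.

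To prove $P_i=\sign(\gamma_i,\gamma_{i+1})$ I would invoke the colour-permutation invariance (Lemma~\ref{colourpermlem}) together with the rotational symmetry of $GP$, exactly as in the proof of Lemma~\ref{Lem_Comb_colourTriples}: it suffices to verify the identity for one representative edge of $T_\pm$ and one of $H_\pm$, say $(\gamma_i,\gamma_{i+1})=(\mathsf{abc},\mathsf{bca})$ and $(\gamma_i,\gamma_{i+1})=(\mathsf{aab},\mathsf{cbc})$. In each of these cases the analysis inside the proof of Lemma~\ref{Lem_Comb_colourTriples} already pins down \emph{all} the colours on the three spokes $u_{i+j k}v_{i+j k}$, on the three triangle edges, and on the outer edges $u_{i+1+jk}u_{i+2+jk}$ — there is no freedom. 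So one simply reads off, from the fixed drawing in Figure~\ref{duererfig}, the clockwise cyclic order of the three colours at each of the six vertices entering $P_i$, computes the six individual signs, and multiplies. For $(\mathsf{abc},\mathsf{bca})$ the spokes get $\mathsf c,\mathsf a,\mathsf b$ and the triangle $v_iv_{k+i}v_{2k+i}$ gets $\mathsf{abc}$; one checks the product of the six vertex signs is $+$, matching $\sign(\mathsf{abc},\mathsf{bca})=+$ in $T_\pm$. For $(\mathsf{aab},\mathsf{cbc})$ the spokes get $\mathsf{bca}$, and the product comes out $-$, matching the negative clockwise edge of $H_\pm$. One should be a little careful about which of the two orientations of a $T$- or $H$-edge is being traversed (clockwise versus counter-clockwise), since $\sign(\gamma_i,\gamma_{i+1})$ depends on the direction; but $T_\pm$ and $H_\pm$ are set up precisely so that reversing the traversal corresponds to the mirror-image colouring, which flips an odd number (three) of the six relevant vertex signs in the outer layer and an even number in the inner triangle — actually it is cleanest to just note that reversing orientation swaps the roles of $\gamma_i$ and $\gamma_{i+1}$ and hence applies a colour permutation, whose effect on $P_i$ is governed again by Lemma~\ref{colourpermlem}, so consistency is automatic.

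The main obstacle is bookkeeping rather than conceptual: one must be scrupulous about (a) the indexing, i.e.\ that the six vertices assigned to $P_i$ really do partition $V(GP)$ as $i$ ranges over $1,\dots,k$, including the wrap-around at $i=k$ where $\gamma_{k+1}$ is the cyclic shift of $\gamma_1$ — this is exactly why the lemma is stated with $\gamma_{k+1}$ defined as it is, and it is what makes $\prod_{i=1}^k\sign(\gamma_i,\gamma_{i+1})$ a genuine cyclic product; and (b) the geometric reading of clockwise order at $u_{i+1}$, whose incident edges are $u_iu_{i+1}$, $u_{i+1}u_{i+2}$, $u_{i+1}v_{i+1}$, versus at $v_i$, whose incident edges are $v_{k+i}v_i$, $v_iv_{2k+i}$, $v_iu_i$ — the orientation of the inner triangle relative to the outer cycle in Figure~\ref{duererfig} must be used consistently. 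Once a single correct case computation is in hand, the rest follow by the symmetry reductions already exploited earlier in the paper, so the write-up can be kept short.
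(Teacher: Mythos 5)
Your overall strategy is exactly the paper's: partition $V(GP)$ into $k$ blocks of six vertices, prove the local identity ``block product $=$ edge sign'', and reduce the verification to a few representative arcs via rotations and colour permutations. Two details go wrong, however. First, the grouping: your $P_i$ pairs the outer vertices $u_{i+1},u_{k+i+1},u_{2k+i+1}$ with the inner triangle $v_iv_{k+i}v_{2k+i}$. But the colour of the spoke $u_iv_i$ is the colour missing from $\{\gamma(u_{i-1}u_i),\gamma(u_iu_{i+1})\}$, so that triangle is determined by $(\gamma_{i-1},\gamma_i)$, not by $(\gamma_i,\gamma_{i+1})$. This is fatal in the $H$ case: for $\gamma_i=\mathsf{aab}$ the two admissible spoke triples at $u_i,u_{k+i},u_{2k+i}$ (corresponding to $\gamma_{i-1}=\mathsf{bcc}$ or $\gamma_{i-1}=\mathsf{cbc}$) are $(\mathsf c,\mathsf b,\mathsf a)$ and $(\mathsf b,\mathsf c,\mathsf a)$, which differ by the transposition $\mathsf b\leftrightarrow\mathsf c$; by the mechanism of Lemma~\ref{colourpermlem} this flips all three signs on the triangle and hence flips the inner half of $P_i$. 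So $P_i$ is not even a function of $(\gamma_i,\gamma_{i+1})$, and the identity $P_i=\sign(\gamma_i,\gamma_{i+1})$ is false. (For $T$-type triples you get away with it, since the two spoke triples then differ by a $3$-cycle.) The correct block --- the one your opening sentence actually names, but your displayed $P_i$ does not use --- attaches to $(\gamma_i,\gamma_{i+1})$ the triangle $v_{i+1}v_{k+i+1}v_{2k+i+1}$ hanging off the outer vertices $u_{i+1},u_{k+i+1},u_{2k+i+1}$; that block's sign product genuinely depends only on $(\gamma_i,\gamma_{i+1})$.

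Second, the case count. You check one arc per graph and dismiss the reversed arc as automatic via a colour permutation. That cannot work: rotations of $GP$ and colour permutations send clockwise arcs of $T_\pm$ (resp.\ $H_\pm$) to clockwise arcs, and the two orientations of an edge of $T_\pm$ carry \emph{opposite} signs, so no sign-preserving symmetry identifies them. The paper verifies four constellations ($\mathsf{abc}\to\mathsf{bca}$, $\mathsf{bca}\to\mathsf{abc}$, $\mathsf{aab}\to\mathsf{bcc}$, $\mathsf{bcc}\to\mathsf{aab}$) precisely because the sign depends on the direction; your retracted remark about a reflection flipping an odd number of outer signs is in fact the germ of a correct argument, but as written the reduction to two cases is unjustified. (Also, with the orientation of Figure~\ref{Fig_Tpm_and_Hpm}, the arc $\mathsf{abc}\to\mathsf{bca}$ is anticlockwise and the six-vertex product is $-$, not $+$ as you state; your two signs are flipped consistently, so this may only be a mirrored convention, but it needs to be fixed to one drawing.) Both issues are repairable, but as written the central local identity is false and the symmetry reduction is incomplete.
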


\begin{proof}
We partition the vertices of $GP$ into $k$ parts, namely into the sets 
\begin{equation*}
V_{i-1} \coloneqq \lbrace u_{i},v_{i}, u_{k+i},v_{k+i}, u_{2k+i},v_{2k+i} \rbrace  \text{ for } i \in \Zz_k.
\end{equation*}
See Figure~\ref{Fig_Vertices_V_i} for the vertices in $V_{i-1}$.
Factorising
\begin{equation*}
\sign(\gamma) 
= \prod\limits_{w \in V} \sign_{\gamma}(w) 
= \prod\limits_{i \in \Zz_k} \prod\limits_{w \in V_i} \sign_{\gamma}(w),
\end{equation*}
we see that the lemma is proved if 
\begin{equation}\label{Viclaim}
\prod\limits_{w \in V_i} \sign_{\gamma}(w)=\sign(\gamma_i,\gamma_{i+1})
\end{equation}
holds true for all~$i=1, \ldots, k$.

That the total sign on $V_i$ depends only on $\gamma_i$ and $\gamma_{i+1}$ is clear 
from Lemma~\ref{Lem_Comb_colourTriples}: $\gamma_i$ and $\gamma_{i+1}$
determine the colours of the edges incident with vertices in $V_i$.
Therefore, there is a function $f$ on the edges of $T_\pm\cup H_\pm$ 
to $\{-,+\}$
so that 
\[
\prod\limits_{w \in V_i} \sign_{\gamma}(w)=f(\gamma_i,\gamma_{i+1})
\] 
Our task reduces to verifying that $f(\gamma_i,\gamma_{i+1})=\sign(\gamma_i,\gamma_{i+1})$.
In principle, we could now check all edges in $T_\pm$ and $H_\pm$, one by one, 
to see whether the signs are correct. 
Instead, we exploit the fact that all vertices in $T_\pm$ (or in $H_\pm$) are in 
some sense the same. 

\begin{figure}[ht]
\centering
\begin{tikzpicture}[scale=0.8,auto]
\tikzstyle{hvertex}=[thick,circle,inner sep=0.cm, minimum size=2.5mm, fill=white, draw=black]
\tikzstyle{hedge}=[ultra thick]
\tikzstyle{redge}=[ultra thick,red]
\tikzstyle{bedge}=[ultra thick,blue]

\def\irad{0.8cm}
\def\orad{1.9cm}
\def\labelpos{150:1.5*\orad}

\node[hvertex] (v1) at (90:\irad) {$-$};
\node[hvertex] (v3) at (90+120:\irad) {$-$};
\node[hvertex] (v2) at (90+240:\irad) {$-$};

\node[hvertex] (u1) at (90:\orad) {$+$};
\node[hvertex] (u3) at (90+120:\orad) {$+$};
\node[hvertex] (u2) at (90+240:\orad) {$+$};

\draw[hedge] (v1) to node[pos=0.6]{$\mathsf b$} (v2);
\draw[hedge] (v2) to node[pos=0.5]{$\mathsf c$} (v3);
\draw[hedge] (v3) to node[pos=0.4]{$\mathsf a$} (v1);

\draw[hedge] (v1) to node[pos=0.5]{$\mathsf c$} (u1);
\draw[hedge] (v2) to node[pos=0.5,swap]{$\mathsf a$} (u2);
\draw[hedge] (v3) to node[pos=0.5]{$\mathsf b$} (u3);

\coordinate (r1) at (60:\orad);
\coordinate (l1) at (120:\orad);
\coordinate (r3) at (60+120:\orad);
\coordinate (l3) at (120+120:\orad);
\coordinate (r2) at (60+240:\orad);
\coordinate (l2) at (120+240:\orad);

\draw[redge] (l1) to node[pos=0.5]{$\mathsf a$} (u1);
\draw[bedge] (u1) to node[pos=0.5]{$\mathsf b$} (r1);
\draw[redge] (l2) to node[pos=0.3]{$\mathsf b$} (u2);
\draw[bedge] (u2) to node[pos=0.5]{$\mathsf c$} (r2);
\draw[redge] (l3) to node[pos=0.5]{$\mathsf c$} (u3);
\draw[bedge] (u3) to node[pos=0.7]{$\mathsf a$} (r3);

\node at (\labelpos){$\mathsf{abc\to bca}$};

\begin{scope}[shift={(6.5,0)}]

\def\irad{0.8cm}
\def\orad{1.9cm}

\node[hvertex] (v1) at (90:\irad) {$-$};
\node[hvertex] (v3) at (90+120:\irad) {$-$};
\node[hvertex] (v2) at (90+240:\irad) {$-$};

\node[hvertex] (u1) at (90:\orad) {$-$};
\node[hvertex] (u3) at (90+120:\orad) {$-$};
\node[hvertex] (u2) at (90+240:\orad) {$-$};

\draw[hedge] (v1) to node[pos=0.6]{$\mathsf b$} (v2);
\draw[hedge] (v2) to node[pos=0.5]{$\mathsf c$} (v3);
\draw[hedge] (v3) to node[pos=0.4]{$\mathsf a$} (v1);

\draw[hedge] (v1) to node[pos=0.5]{$\mathsf c$} (u1);
\draw[hedge] (v2) to node[pos=0.5,swap]{$\mathsf a$} (u2);
\draw[hedge] (v3) to node[pos=0.5]{$\mathsf b$} (u3);

\coordinate (r1) at (60:\orad);
\coordinate (l1) at (120:\orad);
\coordinate (r3) at (60+120:\orad);
\coordinate (l3) at (120+120:\orad);
\coordinate (r2) at (60+240:\orad);
\coordinate (l2) at (120+240:\orad);

\draw[redge] (l1) to node[pos=0.5]{$\mathsf b$} (u1);
\draw[bedge] (u1) to node[pos=0.5]{$\mathsf a$} (r1);
\draw[redge] (l2) to node[pos=0.3]{$\mathsf c$} (u2);
\draw[bedge] (u2) to node[pos=0.5]{$\mathsf b$} (r2);
\draw[redge] (l3) to node[pos=0.5]{$\mathsf a$} (u3);
\draw[bedge] (u3) to node[pos=0.7]{$\mathsf c$} (r3);

\node at (\labelpos){$\mathsf{bca\to abc}$};

\end{scope}

\begin{scope}[shift={(0,-4.7)}]
\node[hvertex] (v1) at (90:\irad) {$+$};
\node[hvertex] (v3) at (90+120:\irad) {$+$};
\node[hvertex] (v2) at (90+240:\irad) {$+$};

\node[hvertex] (u1) at (90:\orad) {$+$};
\node[hvertex] (u3) at (90+120:\orad) {$+$};
\node[hvertex] (u2) at (90+240:\orad) {$-$};

\draw[hedge] (v1) to node[pos=0.6]{$\mathsf a$} (v2);
\draw[hedge] (v2) to node[pos=0.5]{$\mathsf c$} (v3);
\draw[hedge] (v3) to node[pos=0.4]{$\mathsf b$} (v1);

\draw[hedge] (v1) to node[pos=0.5]{$\mathsf c$} (u1);
\draw[hedge] (v2) to node[pos=0.5,swap]{$\mathsf b$} (u2);
\draw[hedge] (v3) to node[pos=0.5]{$\mathsf a$} (u3);

\coordinate (r1) at (60:\orad);
\coordinate (l1) at (120:\orad);
\coordinate (r3) at (60+120:\orad);
\coordinate (l3) at (120+120:\orad);
\coordinate (r2) at (60+240:\orad);
\coordinate (l2) at (120+240:\orad);

\draw[redge] (l1) to node[pos=0.5]{$\mathsf a$} (u1);
\draw[bedge] (u1) to node[pos=0.5]{$\mathsf b$} (r1);
\draw[redge] (l2) to node[pos=0.3]{$\mathsf a$} (u2);
\draw[bedge] (u2) to node[pos=0.5]{$\mathsf c$} (r2);
\draw[redge] (l3) to node[pos=0.5]{$\mathsf b$} (u3);
\draw[bedge] (u3) to node[pos=0.7]{$\mathsf c$} (r3);

\node at (\labelpos){$\mathsf{aab\to bcc}$};
\end{scope}

\begin{scope}[shift={(6.5,-4.7)}]

\def\irad{0.8cm}
\def\orad{1.9cm}

\node[hvertex] (v1) at (90:\irad) {$+$};
\node[hvertex] (v3) at (90+120:\irad) {$+$};
\node[hvertex] (v2) at (90+240:\irad) {$+$};

\node[hvertex] (u1) at (90:\orad) {$-$};
\node[hvertex] (u3) at (90+120:\orad) {$-$};
\node[hvertex] (u2) at (90+240:\orad) {$+$};

\draw[hedge] (v1) to node[pos=0.6]{$\mathsf a$} (v2);
\draw[hedge] (v2) to node[pos=0.5]{$\mathsf c$} (v3);
\draw[hedge] (v3) to node[pos=0.4]{$\mathsf b$} (v1);

\draw[hedge] (v1) to node[pos=0.5]{$\mathsf c$} (u1);
\draw[hedge] (v2) to node[pos=0.5,swap]{$\mathsf b$} (u2);
\draw[hedge] (v3) to node[pos=0.5]{$\mathsf a$} (u3);

\coordinate (r1) at (60:\orad);
\coordinate (l1) at (120:\orad);
\coordinate (r3) at (60+120:\orad);
\coordinate (l3) at (120+120:\orad);
\coordinate (r2) at (60+240:\orad);
\coordinate (l2) at (120+240:\orad);

\draw[redge] (l1) to node[pos=0.5]{$\mathsf b$} (u1);
\draw[bedge] (u1) to node[pos=0.5]{$\mathsf a$} (r1);
\draw[redge] (l2) to node[pos=0.3]{$\mathsf c$} (u2);
\draw[bedge] (u2) to node[pos=0.5]{$\mathsf a$} (r2);
\draw[redge] (l3) to node[pos=0.5]{$\mathsf c$} (u3);
\draw[bedge] (u3) to node[pos=0.7]{$\mathsf b$} (r3);

\node at (\labelpos){$\mathsf{bcc\to aab}$};

\end{scope}

\end{tikzpicture}
\caption{Signs of the vertices in $V_i$ for some consecutive colour triples}\label{signsfig}
\end{figure}
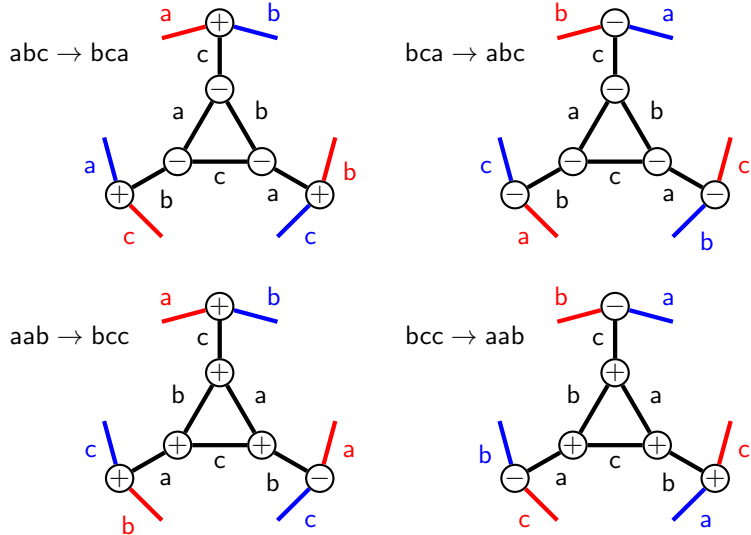

A clockwise rotation of $GP$ by $k$ vertices  induces a shift in a colour triple
$\gamma_i$ from $(\gamma_{i1},\gamma_{i2},\gamma_{i3})$ to $(\gamma_{i2},\gamma_{i3},\gamma_{i1})$.
Note that a rotation of $GP$ obviously does not change the sign of $\gamma$. 
Moreover, permutation of colours preserves the total sign of $V_i$ 
 since swapping two colours changes the sign at all six vertices. 
Therefore we may assume that $\{\gamma_i,\gamma_{i+1}\}=\{\mathsf{abc}, \mathsf{bca}\}$
(if $\gamma_1\in T_\pm$) or that $\{\gamma_i,\gamma_{i+1}\}=\{\mathsf{aab}, \mathsf{bcc}\}$
(if $\gamma_1\in H_\pm$).

This gives four constellations to check, as the sign can (and does) depend on the direction 
of the edge from $\gamma_i$ to $\gamma_{i+1}$. The four constellations are shown 
in Figure~\ref{signsfig}, where we can see, for instance, that the edge from $\mathsf{abc}$
to $\mathsf{bca}$ has a net negative sign under~$f$, while the inverse edge is positive. 

Since, on these four constellations, $f$ coincides with the edge signs of $T_\pm$ and $H_\pm$,
it coincides everywhere, which proves~\eqref{Viclaim}.
\end{proof}

Lemmas~\ref{Lem_Comb_colourTriples} and~\ref{Lemma_signs_sigma_i} 
imply that every positive $1$-factorisation corresponds to a walk in either $T_{\pm}$ or $H_{\pm}$ whose edge signs multiply to $+$. 
We  call such a walk \emph{positive}; whereas a walk whose signs multiply to $-$ is \emph{negative}.

To count such walks, we observe that a walk and its reverse walk might have different signs. Not only the distance between two vertices has an influence, but also the rotational direction of the shortest path. 

For two vertices $x$ and $y$ for which
the clockwise path from $x$ to $y$ is of length $\ell$, we define
\begin{align*}
t_k^{+}(\ell)  &\coloneqq \sharp \left\lbrace \text{ positive walks of length } k \text{ from } x \text{ to } y \text{ in } T_\pm \right\rbrace \\
h_k^{+}(\ell)  &\coloneqq \sharp \left\lbrace \text{ positive walks of length } k \text{ from } x \text{ to } y \text{ in } H_\pm \right\rbrace
\end{align*} 
and $t_k^-(\ell)$ and $ h_k^-(\ell)$ analogously.

Similarly as in Section~\ref{Section_Counting_1-fact} for unsigned colourings, every positive edge colouring of $GP$ now corresponds to a positive walk in $T_{\pm}$ or in $H_{\pm}$. Since all edge colourings with the same associated $1$-factorisation have the same sign, we thus have 
a way to count positive and negative $1$-factorisations via walks in signed graphs:

\begin{lemma} \label{Lem_Number_pos_neg_1F_walks}
The number of positive/negative $1$-factorisations of $GP(3k,k)$ 
is equal to $t^{\pm}_k(2) + 3 h^{\pm}_k(2)$.
\end{lemma}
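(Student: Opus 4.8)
The plan is to mirror the structure of the proof of Lemma~\ref{Lemma_Number_1factors_walks}, but keeping track of signs. Recall that in that proof we established a bijection between $1$-factorisations and edge colourings $\gamma$ of $GP$ for which $\gamma_1\in\{\mathsf{123},\mathsf{112},\mathsf{121},\mathsf{211}\}$ (after fixing the normalisation that $u_1u_2$ gets colour $\mathsf1$, etc.), and by Lemma~\ref{Lem_Comb_colourTriples} these colourings are exactly the walks of length $k$ starting at such a $\gamma_1$ and ending at the forced $\gamma_{k+1}$. The point now is that by Lemma~\ref{Lemma_signs_sigma_i} the sign of each such $1$-factorisation equals the product of the edge signs of the corresponding walk in $T_\pm$ or $H_\pm$, so a positive $1$-factorisation is precisely a positive walk and a negative one a negative walk. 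Hence the number of positive $1$-factorisations is the number of positive walks among these four families, and similarly for negative.

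First I would treat the walk starting at $\gamma_1=\mathsf{123}=\mathsf{abc}$. This walk lives in $T_\pm$; its endpoint is $\gamma_{k+1}=\mathsf{231}=\mathsf{bca}$, which is obtained from $\mathsf{abc}$ by one clockwise step in $T$ (with our clockwise labelling $x_0=\mathsf{abc}$, $x_1=\mathsf{bca}$, $x_2=\mathsf{cab}$, so in fact $\mathsf{bca}=x_1$; one should double-check the orientation, but the clockwise distance from $\mathsf{abc}$ to $\mathsf{bca}$ is either $1$ or $2$). The number of positive such walks is $t_k^+(\ell)$ and the number of negative such walks is $t_k^-(\ell)$ for the appropriate $\ell\in\{1,2\}$; I will pick the convention consistently with how $t_k^\pm(2)$ is defined in the statement, so this family contributes exactly $t_k^\pm(2)$ to the positive/negative count. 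Then I would treat each of the three walks starting at $\gamma_1\in\{\mathsf{112},\mathsf{121},\mathsf{211}\}$; each of these is a vertex of $H_\pm$, and its forced endpoint $\gamma_{k+1}$ is again a vertex of $H$ at the appropriate (even) clockwise distance. By the rotational/colour-permutation symmetry already invoked in Lemma~\ref{Lemma_signs_sigma_i} — swapping two colours preserves the sign of each $V_i$, hence of the whole walk — each of these three families contributes the same number $h_k^\pm(2)$ of positive/negative walks. Summing, the number of positive (resp.\ negative) $1$-factorisations is $t_k^\pm(2)+3h_k^\pm(2)$.

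The main obstacle — really the only delicate point — is the bookkeeping of the clockwise distances and rotational directions, so that the four walk families genuinely match up with the quantities $t_k^\pm(2)$ and $h_k^\pm(2)$ as those were defined (via the \emph{clockwise} path length from $x$ to $y$), rather than with $t_k^\pm(1)$ or with a mix of the two orientations. One has to verify that $\mathsf{abc}\to\mathsf{bca}$ is a clockwise distance $2$ in $T$ with the stated labelling, and that each of $\mathsf{aab},\mathsf{baa},\mathsf{aba}$ sits at clockwise distance $2$ from its respective forced endpoint in $H$; and one must confirm that the three $H_\pm$-families are related by colour permutations in a way that is sign-preserving (which is exactly the content already used in the proof of Lemma~\ref{Lemma_signs_sigma_i}). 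Once that is pinned down, the rest is immediate from Lemmas~\ref{Lem_Comb_colourTriples} and~\ref{Lemma_signs_sigma_i}, and no computation of the values $t_k^\pm$, $h_k^\pm$ themselves is needed at this stage.
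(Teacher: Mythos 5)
Your proposal matches the paper's proof: reduce to edge colourings with $\gamma_1\in\{\mathsf{123},\mathsf{112},\mathsf{121},\mathsf{211}\}$, apply Lemma~\ref{Lemma_signs_sigma_i} together with Lemma~\ref{Lem_Comb_colourTriples} to identify signed $1$-factorisations with signed walks, and count one family in $T_\pm$ plus three in $H_\pm$. The one point you leave hedged does check out --- the clockwise path from $\mathsf{abc}$ to $\mathsf{bca}$ in $T_\pm$ and from each of $\mathsf{112},\mathsf{121},\mathsf{211}$ to its cyclically shifted endpoint in $H_\pm$ has length $2$ --- so the contributions are indeed $t_k^{\pm}(2)$ and $3h_k^{\pm}(2)$ as claimed.
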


\begin{proof}
As  before, in order to count $1$-factorisations it suffices to 
count edge colourings $\gamma$ with $\gamma_1\in\{\mathsf{123},\mathsf{112}, \mathsf{121}, \mathsf{211}\}$.
Lemma~\ref{Lemma_signs_sigma_i} in conjunction with Lemma~\ref{Lem_Comb_colourTriples}
shows that there is a one-to-one correspondence between  positive (resp.\ negative) edge colourings 
and certain positive (resp.\ negative) walks of length $k$ in $T_{\pm}$ and in $H_{\pm}$. 
Namely, these are the $t^{\pm}_k(2)$ walks in $T_{\pm}$ from $\mathsf{123}$ to $\mathsf{231}$
plus the $3h_k^{\pm}(2)$ walks in $H_{\pm}$ with starting 
point $\mathsf{112}, \mathsf{121}, \mathsf{211}$, and respective 
end point $\mathsf{121}, \mathsf{211}, \mathsf{112}$. 
\end{proof}

As in Lemma~\ref{Lemma_Bijection_undir_walks} we can state a connection between walks in $T_{\pm}$ and $H_{\pm}$.

\begin{lemma} \label{Lemma_Bijection_signed_walks}
$h^{\pm}_{k}(2)=t^{\pm}_{k}(2)$ for even $k$.
\end{lemma}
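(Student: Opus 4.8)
The plan is to mimic the proof of Lemma~\ref{Lemma_Bijection_undir_walks}, but now keeping track of signs. Recall that in that earlier lemma we exhibited, for even $k$, a bijection between walks of length $k$ from $x_0$ to $x_2$ in $T$ and walks of length $k$ from $y_0$ to $y_2$ in $H$, with the crucial extra property that the $\ell$th edge of the $T$-walk is traversed clockwise if and only if the $\ell$th edge of the corresponding $H$-walk is. The point is that this same bijection already respects signs: a walk counted by $t^{\pm}_k(2)$ is a walk from $x_0=\mathsf{abc}$ to $x_2$ whose product of edge signs is $\pm$, and by Figure~\ref{Fig_Tpm_and_Hpm} the sign of an edge in $T_\pm$ is $+$ precisely when it is traversed clockwise, whereas the sign of an edge in $H_\pm$ is $-$ precisely when traversed clockwise (equivalently $+$ when counter-clockwise).

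First I would observe that the bijection $W\mapsto W'$ of Lemma~\ref{Lemma_Bijection_undir_walks} sends a walk of length $k$ from $x_0$ to $x_2$ in $T$ to a walk of length $k$ from $y_0$ to $y_2$ in $H$, and that, by the ``moreover'' clause there, $W$ and $W'$ have exactly the same sequence of clockwise/counter-clockwise edge traversals. Next I would compute the sign of $W$ and of $W'$ in terms of that common sequence. If $W$ uses $c$ clockwise edges and $k-c$ counter-clockwise edges, then $\sign(W)=(+)^c(-)^{k-c}=(-1)^{k-c}$ in $T_\pm$; and $\sign(W')=(-)^c(+)^{k-c}=(-1)^{c}$ in $H_\pm$. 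Since $k$ is even, $(-1)^{k-c}=(-1)^{-c}=(-1)^c$, so $\sign(W)=\sign(W')$. Hence the bijection restricts to a bijection between positive walks and between negative walks, i.e.\ $t^+_k(2)=h^+_k(2)$ and $t^-_k(2)=h^-_k(2)$, which is exactly the claim.

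One small point to nail down: Lemma~\ref{Lemma_Bijection_undir_walks} is stated for walks from $x_0$ to $x_2$ and from $y_0$ to $y_2$, and these are precisely the endpoint pairs appearing in the definition of $t^\pm_k(2)$ and $h^\pm_k(2)$ (the clockwise path from $x_0$ to $x_2$, and from $y_0$ to $y_2$, has length $2$). So the index $\ell=2$ and the labelling of the vertices of $T_\pm$, $H_\pm$ in clockwise order match up directly, and no relabelling is needed; I would just remark this explicitly to reassure the reader. It is also worth noting that for odd $k$ both sides are $0$, since $H$ is bipartite and there is no odd walk between two vertices at even distance, so the even-$k$ hypothesis is exactly what makes the statement nontrivial — and is exactly what is used to pass from $(-1)^{k-c}$ to $(-1)^c$.

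I do not expect a real obstacle here: the entire content is the sign bookkeeping in the previous paragraph, and it works precisely because $T_\pm$ assigns $+$ to clockwise edges while $H_\pm$ assigns $-$ to them, so the two sign contributions along a walk are complementary, and evenness of $k$ makes the two complementary products agree. The only thing requiring a little care is to state cleanly that the Lemma~\ref{Lemma_Bijection_undir_walks} bijection preserves the clockwise/counter-clockwise pattern edge-by-edge (already guaranteed by its ``moreover'' clause), rather than merely the total count of each type.
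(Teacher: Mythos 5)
Your proof is correct and follows essentially the same route as the paper: both apply the bijection of Lemma~\ref{Lemma_Bijection_undir_walks}, observe that every arc's sign is flipped between $T_\pm$ and $H_\pm$ while the clockwise/counter-clockwise pattern is preserved edge-by-edge, and conclude that the product of the $k$ sign flips is $(-1)^k=+1$ for even $k$. Your explicit count of $c$ clockwise versus $k-c$ counter-clockwise edges is just a slightly more detailed phrasing of the paper's one-line sign bookkeeping.
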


\begin{proof}
We can canonically extend the map of Lemma~\ref{Lemma_Bijection_undir_walks} to a bijection between walks in $T_{\pm}$ and $H_{\pm}$. Then the bijection maps walks counted by $t_k(1)=t_k⁺(2)+t_k^-(2)$ to walks counted by $h_k(2)=h_k^+(2)+ h_k^-(2)$. 
Since the signs of the clockwise (resp.\ anti-clockwise) arcs are different
in $T_\pm$ and $H_\pm$, any arc in a walk in $T_\pm$ has a different sign
from its image in $H_\pm$. However, as we consider walks of even length~$k$,
 the total sign of the walks is preserved by the bijection, and the assertion follows.
\end{proof}

In order to show that the numbers of positive and negative $1$-factorisations differ, it remains to compute $t_k^+$ and $t_k^-$:
\begin{lemma} \label{Lem_Number_pos_neg_walks_T}
For any integer $k \geq 1$ 
\begin{align*}
t^+_{k}(2) &= \tfrac{1}{6} \left( 2^k - (-1)^{k}\left(1+ (-3)^{\left\lceil \frac{k}{2} \right\rceil }\right)\right)\\
t^-_{k}(2) &= \tfrac{1}{6} \left( 2^k - (-1)^{k}\left(1- (-3)^{\left\lceil \frac{k}{2} \right\rceil }\right)\right)
\end{align*}
\end{lemma}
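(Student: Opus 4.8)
The plan is to determine the sum $t^+_k(2)+t^-_k(2)$ and the difference $t^+_k(2)-t^-_k(2)$ separately, and then recover $t^\pm_k(2)=\tfrac12\bigl((t^+_k(2)+t^-_k(2))\pm(t^+_k(2)-t^-_k(2))\bigr)$. The sum comes for free: a walk of length~$k$ in $T_\pm$ from a fixed vertex is simply a sequence of $k$ turns, at each of which one takes either the unique clockwise (positive) arc or the unique anti-clockwise (negative) arc, so forgetting the arc orientations identifies these walks bijectively with the length-$k$ walks of $T$. Since the two vertices of $T$ at clockwise distance~$2$ are adjacent in $T$, Lemma~\ref{Lemma_Number_walks_C3} yields $t^+_k(2)+t^-_k(2)=t_k(1)=J(k)$.

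For the difference I would pass to the signed adjacency matrix of $T_\pm$. Identify $V(T)=\{x_0,x_1,x_2\}$ with $\Zz_3$ in clockwise order and let $A\in\Zz^{3\times3}$ be given by $A_{i,i+1}=1$ (the clockwise arc), $A_{i,i-1}=-1$ (the anti-clockwise arc), and $A_{i,i}=0$. Then $(A^k)_{i,j}$ equals the sum over all length-$k$ walks $W$ from $x_i$ to $x_j$ of the product $\prod_{e\in W}\sign(e)$, and since such a walk contributes $+1$ when it is positive and $-1$ when it is negative, we get $t^+_k(2)-t^-_k(2)=(A^k)_{0,2}$. A one-line $3\times3$ computation shows $A^3=-3A$, hence $A^{k+2}=A^{k-1}A^{3}=-3A^k$ for all $k\geq1$; by induction $A^k=(-3)^{(k-1)/2}A$ for odd~$k$ and $A^k=(-3)^{k/2-1}A^2$ for even~$k\geq 2$. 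Reading off the $(0,2)$-entry, using $A_{0,2}=-1$ and $(A^2)_{0,2}=1$, gives in both parities
\[
t^+_k(2)-t^-_k(2)=(A^k)_{0,2}=-\tfrac13(-1)^k(-3)^{\lceil k/2\rceil}.
\]

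Substituting $t^+_k(2)+t^-_k(2)=J(k)=\tfrac13\bigl(2^k-(-1)^k\bigr)$ (from~\eqref{Eq_Jacob_Recur}) together with this value of the difference into $t^\pm_k(2)=\tfrac12\bigl((t^+_k(2)+t^-_k(2))\pm(t^+_k(2)-t^-_k(2))\bigr)$ and simplifying yields exactly the two asserted expressions. I do not expect a genuine obstacle here: the only step calling for a little care is the uniform treatment of the two parities when condensing $A^k=(-3)^{(k-1)/2}A$ and $A^k=(-3)^{k/2-1}A^2$ into the single formula for $t^+_k(2)-t^-_k(2)$ — a routine check, which can alternatively be replaced by a roots-of-unity filter based on the eigenvalues $0,\pm\sqrt{-3}$ of the circulant matrix $A$.
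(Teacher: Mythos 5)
Your proof is correct, and it takes a genuinely different route from the paper's. The paper stays entirely within combinatorial walk-counting: it exploits the reflection symmetry of $T_\pm$ to relate $t_k^{\pm}(1)$, $t_k^{\pm}(0)$ and $t_k^{\pm}(2)$, decomposes walks by their last one or two steps to get separate recurrences for even and odd $k$, and then verifies a guessed closed form for odd $k$ before transferring it to even $k$ via \eqref{Eq_even_k}. You instead split the target into the sum $t^+_k(2)+t^-_k(2)=t_k(1)=J(k)$ — which is exactly the paper's identity \eqref{Eq_Sum_+-} — and the difference, read off as the $(0,2)$-entry of the $k$th power of the signed adjacency matrix $A$. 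I verified the key facts: $A^2$ has $(0,2)$-entry $1$, the third power satisfies $A^3=-3A$ (consistent with the spectrum $\{0,\pm\sqrt{-3}\}$), the resulting closed form $(A^k)_{0,2}=-\tfrac13(-1)^k(-3)^{\lceil k/2\rceil}$ is right in both parities (and matches $t^+_1(2)=0$, $t^-_1(2)=1$, $t^+_2(2)=1$, $t^-_2(2)=0$), and the final substitution reproduces both asserted formulas. Your argument is shorter and makes the origin of the $(-3)^{\lceil k/2\rceil}$ term transparent, with no guessing of a solution and no need for the auxiliary quantities $t_k^\pm(0)$ and $t_k^\pm(1)$; the paper's version has the mild advantage of remaining in the elementary recurrence language already set up for Lemma~\ref{Lemma_Number_walks_C3}.
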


\begin{proof}
Every walk in $T^{\pm}$ from a vertex $v$ to a vertex $v'$ induces a reflected walk from $v'$ to $v$. In that walk, every arc is replaced by its reversed arc, which has opposite sign. Furthermore, the shortest path from $v$ to $v'$ in clockwise direction is of length $1$ if and only if the shortest path from $v'$ to $v$ in clockwise direction has length $2$. Therefore
\begin{equation}
t_k^{\pm}(1)= 
\begin{cases}
t_k^{\pm}(2) & \text{ if } k \text{ is even } \\
t_k^{\mp}(2) & \text{ if } k \text{ is odd }
\end{cases} 
\label{Eq_Indexswap}
\end{equation}
Note that the signs swap for odd $k$. 

In the same way follows for odd $k$ that $t_k^{+}(0)= t_k^{-}(0 )$. 
Since $t_k^{+}(0) + t_k^{-}(0)=t_k(0)$ we get 
$t_k^\pm(0)=\tfrac{1}{2}t_k(0)$ and thus 
with 
Lemma~\ref{Lemma_Number_walks_C3} that 
\begin{equation}\label{tk0eq}
t_k^\pm(0) = \tfrac{1}{3}(2^{k-1}-1)\quad\text{ for odd }k.
\end{equation}
We use Lemma~\ref{Lemma_Number_walks_C3} together with \eqref{Eq_Jacob_Recur} and note for later that
\begin{equation}
t_k^{+}(\ell) + t_k^{-}(\ell)= t_k(\ell)= 
 J(k) = \tfrac{1}{3}\left(2^k - (-1)^k\right) \quad\text{ for } \ell \in \{ 1,2 \} 
\label{Eq_Sum_+-}
\end{equation}
Trivially, a walk of length $k$ must visit a neighbour of 
its last vertex in the $(k-1)$th step, 
and a vertex adjacent to its penultimate vertex in the $(k-2)$th step. Therefore
\[
\begin{array}{ll}
t_k^{\pm}(2) = t_{k-1}^{\mp}(0) + t_{k-1}^{\pm}(1) &\text{for all } k \geq 1\\[3pt]	
t_k^{\pm}(2) = 2 t_{k-2}^{\mp}(2) + t_{k-2}^{\pm}(0) + t_{k-2}^{\pm}(1) &\text{for all } k \geq 2.
\end{array}
\]
Applying~\eqref{Eq_Indexswap} and~\eqref{tk0eq}, 
we obtain 
\begin{align}
t_k^{\pm}(2) 
&= t_{k-1}^{\mp}(0) + t_{k-1}^{\pm}(1) 
\nonumber \\
&= \tfrac{1}{3}\left(2^{k-2} -1\right) + t_{k-1}^{\mp}(2) \quad  &\text{ if } k \text{ is even.} \label{Eq_even_k} 
\end{align}
For odd $k$ we get a recurrence relation by
using again~\eqref{Eq_Indexswap},~\eqref{tk0eq} and additionally~\eqref{Eq_Sum_+-}
\begin{align}
t_k^{\pm}(2) 
&= 2 t_{k-2}^{\mp}(2) + t_{k-2}^{\pm}(0) + t_{k-2}^{\pm}(1) \nonumber\\
&= 3 t_{k-2}^{\mp}(2) + \tfrac{1}{6} \left(  2^{k-2} -2 \right) \nonumber\\
&= -3 t_{k-2}^{\pm}(2) + \tfrac{1}{6} \left( 6 \left( 2^{k-2} +1 \right) + 2^{k-2} -2\right) \nonumber\\
&= - 3 t_{k-2}^{\pm}(2) + \tfrac{1}{6} \left( 7 \cdot 2^{k-2} + 4 \right)  \quad  &\text{ if } k \text{ is odd.} \label{Eq_odd_k} 
\end{align}
It straightforward to check that 
\begin{equation*}
t^+_{k}(2) 
= \tfrac{1}{6} \left( (-3)^{\tfrac{k+1}{2}} + 2^{k} +1\right)  \quad\text{ for odd } k 
\end{equation*}
satisfies the recurrence relation~\eqref{Eq_odd_k} and the initial condition $t^+_1(2)=0$. 


Therefore, we deduce with~\eqref{Eq_Sum_+-} that 
\begin{align}
t^-_{k}(2) 
&= \tfrac{1}{6}\left(2^{k+1} +2\right) - \tfrac{1}{6} \left( (-3)^{\tfrac{k+1}{2}} + 2^{k} +1\right) \nonumber\\
&= \tfrac{1}{6} \left(  - (-3)^{\tfrac{k+1}{2}} + 2^{k} +1 \right)& \text{ if } k \text{ is odd}. \nonumber
\intertext{The transition to even $k$ is now made by using~\eqref{Eq_even_k}. We obtain }
t_k^{+}(2) 
&= \tfrac{1}{3}\left(2^{k-2} -1\right) + t_{k-1}^{-}(2) \nonumber \\
&= \tfrac{1}{6}\left(2^{k-1} -2\right) + \tfrac{1}{6} \left(- (-3)^{\tfrac{k}{2}}  +  2^{k-1} +1 \right) \nonumber\\
&= \tfrac{1}{6} \left( - (-3)^{\tfrac{k}{2}}  + 2^{k} -1 \right) &\text{ if } k \geq 2 \text{ is even } \nonumber\\
t_k^{-}(2) 
&= \tfrac{1}{3}\left(2^{k-2} -1\right) + t_{k-1}^{+}(2) \nonumber\\
&= \tfrac{1}{6}\left(2^{k-1} -2\right) + \tfrac{1}{6} \left( (-3)^{\tfrac{k}{2}} + 2^{k-1} +1\right)\nonumber\\
&= \tfrac{1}{6} \left( (-3)^{\tfrac{k}{2}}  + 2^{k} -1  \right)  &\text{ if } k\geq 2 \text{ is even}. \nonumber
\end{align}
The different values of $(-1)^k$ and $\left\lceil \tfrac{k}{2} \right\rceil $ for odd and even $k$ yield the formulas for $t_k^{+}$ and $t_k^{-}$.
\end{proof}

We have finally collected all necessary facts to finish the proof of Theorem~\ref{Thm_LECC_proved}.

\begin{proof}[Proof of Theorem~\ref{Thm_LECC_proved}]
By Corollary~\ref{Cor_Number_pos_neg_1F_k_odd}, it remains to consider the case of even $k$. 

By Lemma~\ref{Lem_Number_pos_neg_1F_walks}, the number of positive/negative of $1$-factorisations is equal to $t^{\pm}_{k}(2) + 3 c^{\pm}_{k}(2)$, 
which is the same as $4t^{\pm}_{k}(2)$, by 
Lemma~\ref{Lemma_Bijection_signed_walks}.
Applying Lemma~\ref{Lem_Number_pos_neg_walks_T}, we see that $t_k^+(2)\neq t_k^-(2)$, 
which shows that the sum of the signs of all $1$-factorisations
is not zero. Thus, the Alon-Tarsi criterion, Theorem~\ref{Thm_ElGo}, 
concludes the proof.
\end{proof}

\bibliographystyle{abbrv}

\bibliography{GenPet_Bib.bib}

\vfill

\small
\vskip2mm plus 1fill
\noindent
Version \today{}
\bigbreak

\noindent
Henning Bruhn
{\tt <henning.bruhn@uni-ulm.de>}\\
Laura Gellert
{\tt <laura.gellert@uni-ulm.de>}\\
Jacob G\"unther\\
Institut f\"ur Optimierung und Operations Research\\
Universit\"at Ulm, Ulm\\
Germany\\

\end{document}